\newtheorem{thm}{Theorem}[section]
\newtheorem{prop}[thm]{Proposition}
\newtheorem{lem}[thm]{Lemma}
\newtheorem{corollary}[thm]{Corollary}
\theoremstyle{definition}
\newtheorem{defn}[thm]{Definition}
\theoremstyle{remark}
\newtheorem*{rem}{Remark}
\newtheorem*{notation}{Notation}
\newtheorem*{ques}{Question}
\renewcommand{\H}{\ensuremath{\mathcal{H}} }
\renewcommand{\u}{\ensuremath{\mathbf{u}} }
\renewcommand{\v}{\ensuremath{\mathbf{v}} }
\newcommand{\w}{\ensuremath{\mathbf{w}} }
\newcommand{\W}{\ensuremath{\mathbf{W}} }
\newcommand{\V}{\ensuremath{\mathbf{V}} }
\renewcommand{\a}{\ensuremath{\mathbf{s}} }
\newcommand{\axisc}{\ensuremath{\mathbf{c}} }
\newcommand{\E}{\ensuremath{\mathbb{E}} }
\newcommand{\dis}{\ensuremath{\parallel}}
\newcommand{\inter}{\ensuremath{\perp}}
\thanks {Mike Carr was partially supported by NSF grant DMS-1106726}
\begin{document}

\begin{abstract}
We show that non-abelian two-generator subgroups of right-angled Artin groups are quasi-isometrically embedded free groups.  This provides an alternate proof of a theorem of A. Baudisch: that all two-generator subgroups are free or free abelian. Additionally, it shows that they are quasi-isometrically embedded.  Our theorem also gives a method for detecting groups that are not isomorphic to a subgroup of any RAAG.  We present some counterexamples in subgroups with more than two generators.
\end{abstract}
\title[Two-generator subgroups of RAAGs are QI embedded]{Two-generator subgroups of right-angled Artin groups are quasi-isometrically embedded}
\author{Mike Carr}
\maketitle

\section{Introduction}

\begin{defn}
Given a combinatorial graph $\Gamma$ with vertex set $V$ and edge set $E$, the \emph{right-angled Artin group} $A_\Gamma$ is the group presented by the generating set $V$ and the relations $\{ v_i v_j = v_jv_i ~|~ (v_i,v_j) \in E \}$.
\end{defn}

Right-angled Artin groups (RAAGs) constitute a spectrum between free abelian groups, given by complete graphs, and free groups, given by edgeless graphs.  The simple definition belies that fact that complicated groups can exist as subgroups of right-angled Artin groups. Crisp and Weist showed that nearly all surface groups can be embedded in some RAAG \cite{CW04}.   In addition to providing a wealth of interesting examples, RAAG subgroups have underpinned several recent results in group theory and topology.  The recent proof of the virtual Haken conjecture by Ian Agol \cite{AGM12} relied on showing that every hyperbolic 3-manifold group had a finite index subgroup that embeds in a RAAG via a map defined by Haglund and Wise \cite{HW08}.  Hsu and Wise also used embeddings of subgroups to show that certain graph groups are linear \cite{HW10}.

The behavior of two-generator subgroups of RAAGs is much more circumscribed.  A theorem of A. Baudisch completely describes their group structure.

\begin{thm}\cite[1.3]{Bau81}
\label{2gen}
Every two generator subgroup of $A_\Gamma$ is either free or free abelian.
\end{thm}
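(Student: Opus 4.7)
The plan is to prove the stronger statement advertised in the abstract: every non-abelian two-generator subgroup of $A_\Gamma$ is a quasi-isometrically embedded free group of rank two. Theorem~\ref{2gen} then follows immediately, since $A_\Gamma$ is torsion-free and a torsion-free two-generator abelian group is either $\mathbb{Z}$ or $\mathbb{Z}^2$. The whole argument is best carried out in the universal cover $X_\Gamma$ of the Salvetti cube complex, which is CAT(0) and on which $A_\Gamma$ acts properly and cocompactly.

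First I would set up axes. Because $A_\Gamma$ is torsion-free, every nontrivial element is hyperbolic on $X_\Gamma$ and admits a combinatorial axis; after conjugating we may assume $u$ and $v$ are cyclically reduced. The goal is a dichotomy: either $u$ and $v$ share enough combinatorial structure along their axes to force commutation, or the axes are ``transverse'' enough to support a ping-pong argument. The algebraic input for the commuting case is Servatius' centralizer theorem: for cyclically reduced $g$, $C(g)$ is generated by the irreducible factors of $g$ together with $A_{\mathrm{lk}(\mathrm{supp}(g))}$. When $[u,v]=1$ this pins down $\langle u,v\rangle$ inside a free-abelian-by-smaller-RAAG piece and forces it to be $\mathbb{Z}$ or $\mathbb{Z}^2$, giving the free abelian conclusion.

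For the non-commuting case I would build a ping-pong directly in $X_\Gamma$ using the hyperplane structure. A cyclically reduced element $g$ translates a specific periodic family of hyperplanes along its axis, and the ``types'' (labels in the Salvetti complex) of these hyperplanes are a conjugacy invariant of $g$. If $u$ and $v$ do not commute, their hyperplane families should be incompatible, so for large $N$ the elements $u^N$ and $v^{\pm N}$ carry the complement of a neighbourhood of one axis into a neighbourhood of the other, and these neighbourhoods can be chosen disjoint. The classical ping-pong lemma then yields $\langle u^N,v^N\rangle\cong F_2$, and a short argument upgrades this to $\langle u,v\rangle\cong F_2$. Uniformity of the CAT(0) translation lengths in the ping-pong data gives the quasi-isometric embedding.

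The step I expect to be the main obstacle is the dichotomy itself: ruling out the intermediate regime in which the axes fellow-travel in $X_\Gamma$ for a long time without the elements actually commuting. In a general CAT(0) cube complex, two axes can stay uniformly close for arbitrary distances without being parallel, so one needs a quantitative combinatorial input—something of the form ``axes that fellow-travel beyond a bound depending only on $|\Gamma|$ must have parallel periodic pieces, hence commuting powers.'' This is where Servatius' normal-form machinery and the link/star structure of $\Gamma$ will do the real work, and I expect most of the technical content of the proof to live here; the ping-pong and the quasi-isometric embedding should then fall out formally.
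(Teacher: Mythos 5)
The decisive gap is the step ``the classical ping-pong lemma then yields $\langle u^N,v^N\rangle\cong F_2$, and a short argument upgrades this to $\langle u,v\rangle\cong F_2$.'' No such upgrade exists: freeness of the subgroup generated by high powers is a strictly weaker statement than freeness of $\langle u,v\rangle$, and it is exactly the difference that carries the content of Baudisch's theorem. The mapping class group illustrates this sharply (and is cited in the introduction for this purpose): two non-commuting Dehn twists have powers generating a free group, yet the twists themselves may satisfy a braid relation, so the group they generate is neither free nor abelian. Since $\langle u^N,v^N\rangle$ has infinite index in a rank-two free group for $N\geq 2$, nothing about $\langle u,v\rangle$ --- neither its isomorphism type nor its quasi-isometric embedding --- follows formally from the ping-pong output; you would have to run an argument with $u$ and $v$ themselves, and their translation lengths cannot be inflated to dominate the fellow-travelling of their axes. (Relatedly, your hoped-for dichotomy with a constant depending only on $|\Gamma|$ is false as stated: already in the free group $\langle a,b,c\rangle$ the elements $a^Mb$ and $a^Mc$ have axes fellow-travelling for length about $M$ with no commuting powers; this is repairable by letting $N$ depend on $u,v$, but the powers-to-generators issue is not.)

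This is why the paper takes a different route in the non-abelian case: it never uses ping-pong or passes to powers. Instead it builds a trivalent tree $T$ on which the abstract free group $F=\langle u,v\rangle$ acts, maps $T$ equivariantly into the universal cover $X$ of the Salvetti complex along chosen geodesics $\u$, $\a$, $\v$, and, after minimizing a lexicographic complexity over all choices of generating pair and basepoint, produces \emph{essential} hyperplanes --- hyperplanes whose preimage in $T$ is a single point. The $F$-orbits of these hyperplanes give a lower bound $|\phi(w)|\geq c\,|w|$ for every reduced word $w$ in the original generators, which simultaneously proves injectivity (hence freeness of the image) and the quasi-isometric embedding. Your treatment of the commuting case via Servatius' centralizer theorem is fine and parallels the paper's appeal to torsion-freeness and the Flat Torus Theorem, but the non-commuting case as proposed does not prove the theorem.
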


This description of two-generator subgroups is interesting for at least two reasons.  First, it passes to subgroups.  If this description holds for a group $G$, then it holds for every subgroup $H<G$.  Second, two generator subgroups of many other well-studied groups have this description or some approximation of it.  In mapping class groups, it is false in general, but given two generators, one can take appropriate powers of each such that the group those powers generate is free or free abelian.

Two generator subgroups of pure braid groups do fit this description. This fact that was used by Scrimshaw \cite{Scr10} and Kim and Koberda \cite{KK13} who recovered Baudisch's result by embedding any $A_\Gamma$ in some pure braid group.  

Our main result concerns the metric structure of two-generator subgroups in a right-angled Artin group.  We use the word metric to realize each group as a metric space.  The desired metric property is the following.

\begin{defn}
A \emph{quasi-isometric embedding} is a map $f:X \to Y$ between metric spaces such that there are constants $\lambda \geq 1$ and $\epsilon \geq 0$ that satisfy
$$\frac{1}{\lambda} d_X(p, q) -\epsilon \leq d_Y(f(p), f(q)) \leq \lambda d_X(p,q) + \epsilon$$
for all $p, q \in X$.
\end{defn}

The metric behavior of abelian subgroups of a RAAG is already well understood, via the Flat Torus Theorem (see in \cite[II.7.1 and II.7.17]{bh99}).  Our main theorem resolves the nonabelian case.

\begin{thm}
\label{main}
Let $F= \langle u, v \rangle$ be a free group.  If $\phi: F \to A_\Gamma$ is a homomorphism, such that $\phi(u)$ and $\phi(v)$ do not commute, then $\phi$ is a quasi-isometric embedding.
\end{thm}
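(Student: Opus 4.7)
The upper bound in the quasi-isometry inequality is automatic: any homomorphism between finitely generated groups is Lipschitz for word metrics, so $d_{A_\Gamma}(\phi(p),\phi(q)) \leq \lambda\, d_F(p,q)$ with $\lambda = \max(|\phi(u)|,|\phi(v)|)$. For the lower bound, Theorem~\ref{2gen} combined with the non-commutation hypothesis forces the image of $\phi$ to be free of rank two, so $\phi$ is injective and we may identify $F$ with $\langle\phi(u),\phi(v)\rangle$ inside $A_\Gamma$. What remains is to show that every reduced word $w = u^{a_1}v^{b_1}u^{a_2}\cdots$ in $F$ satisfies $|\phi(w)|_{A_\Gamma} \geq c\,|w|_F - d$ for some constants $c,d>0$.

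The plan is to cyclically reduce $\phi(u)$ and $\phi(v)$ inside $A_\Gamma$ and then bound the cancellation at each junction of the concatenation $\phi(u)^{a_1}\phi(v)^{b_1}\phi(u)^{a_2}\cdots$. Conjugation changes word length by a bounded additive constant, so replacing $\phi(u)$ and $\phi(v)$ by cyclically reduced conjugates preserves the QI property. The key technical step I would aim for is a bounded cancellation lemma: there exists a constant $C$, depending only on $\phi(u)$ and $\phi(v)$, such that reducing the concatenation to RAAG normal form destroys at most $C$ letters at each junction between a $\phi(u)^{a_i}$-block and an adjacent $\phi(v)^{b_i}$-block. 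Summing over the $|w|_F$ many junctions, and using that each syllable contributes at least one surviving letter to the final normal form, produces a linear lower bound of the required shape.

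I expect the main obstacle to be the bounded cancellation lemma itself. In a free group such a bound is automatic; in a RAAG one must simultaneously control two phenomena: direct cancellation of mutually inverse letters at the interface, and the subtler possibility of a letter commuting past an entire syllable to cancel further in. The hypothesis that $\phi(u)$ and $\phi(v)$ do not commute is exactly what rules out unbounded interaction, since an unboundedly long initial segment of $\phi(v)$ commuting across the whole of $\phi(u)$ would, in the limit, force a common commuting structure equivalent to $[\phi(u),\phi(v)]=1$. Making this quantitative, likely via the Servatius normal form together with an analysis of links and centralizers in $\Gamma$, or alternatively through a CAT(0) argument using the combinatorial axes of $\phi(u)$ and $\phi(v)$ and their separating hyperplanes in the universal cover of the Salvetti complex, should close the proof.
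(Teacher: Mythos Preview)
Your outline has two real gaps beyond the one you flag. First, the bounded-cancellation lemma is the entire difficulty and you have not proved it; the heuristic that an unboundedly long commuting prefix would force $[\phi(u),\phi(v)]=1$ is not correct as stated, since individual letters of $\phi(v)$ can commute with every letter of $\phi(u)$ (and hence slide past $\phi(u)^{a}$ for all $a$) without $\phi(v)$ itself centralizing $\phi(u)$. Second, even granting a uniform bound $C$ at each junction, the estimate you obtain is $|\phi(w)|\ge \min(\tau_u,\tau_v)\,|w|_F - 2Ck$ with $k$ the syllable count, and since $k$ can equal $|w|_F$ this says nothing unless $2C<\min(\tau_u,\tau_v)$. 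That inequality fails for generic non-commuting pairs: already in a free group, with $\phi(u)=a$ and $\phi(v)=ab$, the syllable $u^{-1}$ is entirely absorbed in $u^{-1}v\mapsto b$. So your assertion that ``each syllable contributes at least one surviving letter'' requires a preliminary change of basis of $F$---a Nielsen-type reduction adapted to the RAAG---and you have not supplied one. (Your appeal to Theorem~\ref{2gen} for injectivity is logically permissible but unnecessary and somewhat backwards: a linear lower bound on $|\phi(w)|$ yields injectivity for free, and the paper in fact runs the implication in that direction, recovering Baudisch as a corollary rather than citing him.)

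The paper's argument is the CAT(0) alternative you mention only in your last sentence. It builds a trivalent tree $T$ carrying the $F$-action and maps it to $\widetilde{X_\Gamma}$; a hyperplane is declared \emph{essential} when its preimage in $T$ is a single point. The substantive work (Proposition~\ref{existence} together with the proof of the theorem) is a search, over all bases $\{u,v\}$ of $F$ and all basepoints, for essential hyperplanes crossing $\u$ and $\v_{\mathrm{new}}$. This search is organized by lexicographically minimizing a triple of hyperplane counts $(\#\H_{\u},\#\H_{[1,v]},\#\H_{\v})$, and that minimization is exactly the change-of-basis step your sketch is missing, carried out in hyperplane language rather than via normal forms. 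Once essential $h_u$ and $h_v$ exist, a reduced word of length $n$ in the chosen basis crosses $n$ distinct hyperplanes in the orbits $Fh_u\cup Fh_v$, and the lower bound $|\phi(w)|\ge n$ follows with no cancellation bookkeeping at all.
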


We retrieve Baudisch's original, algebraic description (\ref{2gen}) as a corollary.   Combining with the abelian case, we produce the following metric description.

\begin{corollary}
\label{2genqie}
Every two generator subgroup of $A_\Gamma$ is a quasi-isometrically embedded free or free abelian group.
\end{corollary}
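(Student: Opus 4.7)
The plan is to split on whether the two generators commute in $A_\Gamma$, appeal to Theorem \ref{main} when they do not, and invoke the Flat Torus Theorem when they do. Let $H = \langle a, b \rangle \leq A_\Gamma$.

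First suppose that $a$ and $b$ commute, so $H$ is abelian. Since $A_\Gamma$ is torsion-free, $H$ is free abelian of rank at most two. The group $A_\Gamma$ acts properly, cocompactly, and by semi-simple isometries on its Salvetti complex, which is a CAT$(0)$ cube complex, so the Flat Torus Theorem \cite[II.7.1, II.7.17]{bh99} furnishes an $H$-invariant Euclidean flat on which $H$ acts cocompactly by translations. The orbit map from $H$ to this flat is a quasi-isometry, and composing with the isometric inclusion of the flat into the Salvetti complex and with the quasi-isometry between $A_\Gamma$ and the Salvetti complex supplied by the Milnor--Svarc lemma shows that the inclusion $H \hookrightarrow A_\Gamma$ is a quasi-isometric embedding.

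Now suppose that $a$ and $b$ do not commute. Let $F = \langle u, v \rangle$ be the abstract free group of rank two, and let $\phi \colon F \to A_\Gamma$ be the homomorphism $u \mapsto a$, $v \mapsto b$. By hypothesis $\phi(u)$ and $\phi(v)$ do not commute, so Theorem \ref{main} asserts that $\phi$ is a quasi-isometric embedding with some constants $\lambda, \epsilon$. Applying the lower bound in the definition to any $w \in \ker \phi$ yields $d_F(w, 1) \leq \lambda \epsilon$, so $\ker \phi$ is finite, and since $F$ is torsion-free the kernel is trivial. Consequently $\phi$ is injective and identifies $H$ with the free group of rank two, quasi-isometrically embedded into $A_\Gamma$.

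The substantive content of the corollary is housed entirely in Theorem \ref{main}; the abelian half is a standard Flat Torus argument, and the case split is natural. No step presents a genuine obstacle, and the only incidental verification is the injectivity of $\phi$ in the non-commuting case, which is automatic from the lower bound in the quasi-isometric embedding inequality.
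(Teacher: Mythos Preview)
Your proof is correct and follows essentially the same route as the paper: split into the abelian and non-abelian cases, invoke the Flat Torus Theorem for the former and Theorem~\ref{main} for the latter. The only cosmetic difference is that you inline the injectivity argument (the paper isolates it as a separate corollary and uses a powers-of-$w$ trick instead of your bounded-kernel observation); both arguments are valid and yours is arguably more direct.
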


One consequence is that if an injective homomorphism $\psi:F_2 \to G$ is not quasi-isometric, then $G$ is not isomorphic to a subgroup in any RAAG.  

There is also a divergence between the algebraic and metric treatments in higher rank free groups.  We produce a free subgroup of a RAAG whose inclusion is not a quasi-isometric embedding.

\section{Background}
This section will recall some standard results from geometric group theory, beginning with a discussion of quasi-isometric embeddings.

\subsection{Quasi-isometries}

It is a straightforward exercise to show that compositions of quasi-isometric embeddings are quasi-isometric embeddings.  The next natural step is to define a relation based on those embeddings that have an inverse, defined as follows.

\begin{defn}
A \emph{quasi-isometry} is a quasi-isometric embedding such that all of $Y$ lies within a fixed distance of the image $f(X)$.  If such a map exists, then $X$ and $Y$ are \emph{quasi-isometric}.
\end{defn}

One can show that a quasi-isometry $f$ has a quasi-inverse, a quasi-isometry $g$ such that $f\circ g$ and $g\circ f$ move points a bounded distance.  ``Quasi-isometric" forms an equivalence relation on metric spaces.  It is the natural equivalence relation for the study of groups as metric spaces.  From any generating set $S$ of a group $G$, one can produce the word metric $d$ where $d(g_1, g_2)$ is the length of the shortest word in $S$ that represents $g_1^{-1}g_2$.  We can't directly use geodesics to study $G$, since it has none.  Fortunately the Cayley graph of $G$ does have geodesics, and it is straightforward to prove the following:

\begin{prop}
If $G$ has generating set $S$ then the Cayley graph $Cay(G, S)$ (with unit length edges) is quasi-isometric to the group $G$ given the word metric from $S$.
\end{prop}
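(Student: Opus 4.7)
The plan is to exhibit an explicit quasi-isometry via the natural inclusion $\iota\colon G \to Cay(G,S)$ sending each group element to the corresponding vertex, and to verify both the quasi-isometric embedding inequalities and the coarse surjectivity condition with optimal constants.

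For the embedding inequalities, I would prove the stronger statement that $\iota$ is an isometry onto its image: $d_{Cay}(\iota(g_1), \iota(g_2)) = d_G(g_1, g_2)$. The key is a length-preserving correspondence between words in $S \cup S^{-1}$ spelling $g_1^{-1} g_2$ and edge-paths in $Cay(G,S)$ from $\iota(g_1)$ to $\iota(g_2)$: a word $s_{i_1} s_{i_2} \cdots s_{i_n}$ traces the path $g_1, g_1 s_{i_1}, g_1 s_{i_1}s_{i_2}, \ldots, g_1 s_{i_1}\cdots s_{i_n} = g_2$, each of whose $n$ edges has unit length, and conversely an edge-path of length $n$ reads off such a word. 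Taking infima on both sides yields the equality, giving the embedding inequalities with $\lambda = 1$ and $\epsilon = 0$.

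For coarse surjectivity, every point of $Cay(G,S)$ either is a vertex (hence lies in $\iota(G)$) or lies in the interior of a unit-length edge, so it is within distance $1/2$ of one of its two endpoint vertices. Thus $Cay(G,S)$ lies within Hausdorff distance $1/2$ of $\iota(G)$, completing the verification.

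The only step beyond pure bookkeeping is passing from combinatorial edge-path lengths to the metric distance in the geometric realization: a priori a shortest path could wander partway into an edge and reverse. I would dispatch this by noting that between two vertices, any rectifiable path decomposes into an alternation of whole-edge traversals and back-and-forth excursions into edge interiors, and removing the excursions strictly shortens the path; hence the infimum is attained by an edge-path and the two notions of distance agree. This is the one geometric point that needs attention; the remainder is a direct comparison of definitions.
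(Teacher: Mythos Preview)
Your argument is correct and is exactly the standard one. The paper does not actually supply a proof of this proposition; it simply remarks that it is ``straightforward to prove'' and moves on, so there is nothing to compare against beyond noting that your inclusion-map argument with $\lambda=1$, $\epsilon=0$, and coarse density constant $\tfrac{1}{2}$ is precisely the expected verification the author had in mind.
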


The other natural appeal of quasi-isometry is that the word metric depends on the choice of generating set $S$.  Quasi-isometry resolves this ambiguity.  

\begin{prop}
If two finite generating sets $S_1$ and $S_2$ of a group $G$ give word metrics $d_1$ and $d_2$, then the identity map on $G$ is a quasi-isometry between $(G, d_1)$ and $(G, d_2)$.  
\end{prop}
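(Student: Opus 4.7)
The plan is to show that each generator from one set can be expressed as a bounded-length word in the other, which yields a multiplicative comparison between the two word metrics with no additive error. Finiteness of both generating sets is what makes this comparison uniform.

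First I would define $L_1 := \max_{s \in S_2} |s|_{S_1}$ and $L_2 := \max_{s \in S_1} |s|_{S_2}$, where $|\cdot|_{S_i}$ denotes word length with respect to $S_i$; both maxima are finite since $S_1$ and $S_2$ are finite. Next, for any $g \in G$, take a geodesic expression $g = x_1 x_2 \cdots x_n$ with each $x_i \in S_2 \cup S_2^{-1}$ and $n = |g|_{S_2}$. Replacing each $x_i$ by a fixed $S_1$-word of length at most $L_1$ representing the same element produces an $S_1$-word (not necessarily geodesic) of length at most $L_1 n$ representing $g$, so $|g|_{S_1} \leq L_1 |g|_{S_2}$. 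The symmetric argument gives $|g|_{S_2} \leq L_2 |g|_{S_1}$.

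Applying these bounds to $g^{-1}h$ and using $d_i(g,h) = |g^{-1}h|_{S_i}$, I obtain
$$\frac{1}{L_2}\, d_1(g,h) \;\leq\; d_2(g,h) \;\leq\; L_1\, d_1(g,h)$$
for all $g,h \in G$. Setting $\lambda := \max(L_1, L_2)$ and $\epsilon := 0$ shows the identity $\mathrm{id}:(G, d_1) \to (G, d_2)$ satisfies the quasi-isometric embedding inequality. Because the identity is a bijection, the image is all of $(G, d_2)$, so the coarse-surjectivity condition required to promote the embedding to a quasi-isometry is trivially satisfied.

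There is no real obstacle here; the only point to state carefully is that substituting a word for a letter in a geodesic still represents the same group element and produces a valid (possibly non-geodesic) word whose length furnishes an upper bound on $|g|_{S_i}$. It is, however, worth emphasizing that finiteness of the generating sets is essential: without it, $L_1$ or $L_2$ could fail to be finite and the comparison would break down.
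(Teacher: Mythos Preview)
Your argument is correct and is the standard proof of this fact. The paper itself states this proposition as background without supplying a proof, so there is nothing to compare against; your write-up is exactly the kind of elementary verification the paper is implicitly invoking.
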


A more general and more powerful theorem concerning quasi-isometries of groups is the following:

\begin{thm}[The \includegraphics{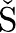}varc-Milnor Lemma \cite{Mil68}]
If a group $G$ acts properly, cocompactly and by isometries on a geodesic metric space $X$, then $G$ is finitely generated.  Furthermore for any $x \in X$, the orbit map $g \mapsto gx$ is a quasi-isometry.   
\end{thm}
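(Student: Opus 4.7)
The plan is to build a single finite candidate generating set $S$ for $G$ out of the geometry of $X$, and then use one subdivision argument to simultaneously verify finite generation, the upper bound $|g|_S \lesssim d(x,gx)$, and the coarse density of the orbit. The key tension is between cocompactness, which gives approximants for every point of $X$, and properness, which controls how many group elements crowd near $x$.

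First, I would use cocompactness to choose $R > 0$ such that the $G$-translates of the closed ball $\overline{B}(x,R)$ cover $X$; in particular every point of $X$ lies within distance $R$ of some point of the orbit $Gx$, which already secures coarse density of the orbit map. Next, set
\[
S \;=\; \{\, g \in G \setminus \{e\} \;:\; d(x, gx) \leq 2R + 1 \,\}.
\]
By properness applied to the compact set $\overline{B}(x, 2R+1)$, the set $S$ is finite, so it is a candidate finite generating set and will give finite generation as soon as it is shown to generate.

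The main step is to show that $S$ generates $G$ and that $|g|_S$ is controlled by $d(x, gx)$. Given $g \in G$, pick a geodesic from $x$ to $gx$ (this is where the geodesic hypothesis on $X$ is used) and subdivide it into points $x = y_0, y_1, \dots, y_n = gx$ with consecutive distances at most $1$ and $n \leq d(x,gx) + 1$. For each intermediate $y_i$, coarse density supplies a $g_i \in G$ with $d(y_i, g_i x) \leq R$, and the endpoints force $g_0 = e$ and $g_n = g$. Then
\[
d(x,\, g_i^{-1} g_{i+1}\, x) \;=\; d(g_i x, g_{i+1} x) \;\leq\; R + 1 + R \;=\; 2R + 1,
\]
so each $g_i^{-1} g_{i+1}$ lies in $S \cup \{e\}$. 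The telescoping product $g = \prod_{i=0}^{n-1} g_i^{-1} g_{i+1}$ then writes $g$ as a word in $S$ of length at most $n$, giving both finite generation and the upper bound $|g|_S \leq d(x, gx) + 1$.

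For the opposite inequality, each $s \in S$ satisfies $d(x, sx) \leq 2R+1$, so the iterated triangle inequality combined with the isometric action gives $d(x, gx) \leq (2R+1)\, |g|_S$. The two bounds together make the orbit map a quasi-isometric embedding with constants depending only on $R$, and the coarse density established earlier upgrades it to a quasi-isometry. I expect the main obstacle to be the geodesic subdivision step, since it is the one point where the metric geometry of $X$ has to be converted into algebraic information about $G$; everything else is bookkeeping once $R$, $S$, and the approximants $g_i$ are in place.
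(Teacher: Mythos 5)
The paper does not give its own proof of this lemma---it simply cites Milnor and the proof in Bridson--Haefliger I.8.19---and your argument is precisely that standard proof, carried out correctly: cocompactness supplies the density radius $R$, properness makes $S=\{g : d(x,gx)\le 2R+1\}$ finite, and the geodesic-subdivision/telescoping argument yields $|g|_S-1\le d(x,gx)\le (2R+1)|g|_S$, which together with $R$-density of the orbit gives the quasi-isometry. The only caveat is that in a general geodesic metric space closed balls need not be compact, so the finiteness of $S$ needs either the metric form of properness or the finite-covering argument used in the cited reference; with that understood, your proof is complete.
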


An accessible proof is found in \cite[I.8.19]{bh99}.

\subsection{CAT(0) spaces}

We will denote a geodesic segment between points $a$ and $b$ in a geodesic metric space by $[a, b].$

\begin{defn}
A geodesic metric space $X$ has the CAT(0) property if for any three points $p, q, r \in X$, any geodesic triangle $[p, q] \cup [q, r] \cup [r,s ]$  is at least as thin as a comparison triangle $[P,Q] \cup [Q,R] \cup [R,P]$ with equal length sides in the euclidean plane $\E$.   That is, for any $x \in [p,q]$ and $y \in [q, r]$, the corresponding points $X \in [P, Q]$ and $Y \in [Q, R]$ with $d_X(x,p) = d_\E(X, P)$ and $d_X(y,q)=d_\E(Y, Q)$ have the property:  

$$d_X(x, y) \leq d_\E(X, Y).$$

If every point in $X$ has a neighborhood that is CAT(0), then we say $X$ is \emph{non-positively curved}.
\end{defn}

The CAT(0) property has many useful consequences.  A general exposition is found in \cite[II.1]{bh99}. 

\begin{prop}\label{vary}
If $X$ is CAT(0) and $p, q, r \in X$ with $d_X(q, r)< \epsilon$ then the geodesics $[p,q]$ and $[p, r]$ each lie within an $\epsilon$ -neighborhood of the other.
\end{prop}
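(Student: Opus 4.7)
My plan is to apply the CAT(0) inequality directly to the triangle $\triangle pqr$ via its Euclidean comparison triangle $\bar\triangle PQR$. Set $a = d_X(p,q)$, $b = d_X(p,r)$, and $c = d_X(q,r) < \epsilon$. The idea is to parametrize the two geodesics $[p,q]$ and $[p,r]$ from the shared endpoint $p$ in a proportional fashion, verify that the corresponding points in the comparison triangle are $\epsilon$-close in $\E$, and then transfer the estimate back to $X$.

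Concretely, for $t \in [0,1]$ let $\alpha(t) \in [p,q]$ and $\beta(t) \in [p,r]$ be the points at distance $ta$ and $tb$ from $p$, respectively, and let $\bar\alpha(t), \bar\beta(t)$ be the corresponding points in $\bar\triangle PQR$. Writing $\theta$ for the Euclidean angle of $\bar\triangle PQR$ at $P$, the law of cosines on $\bar\triangle PQR$ gives $c^2 = a^2 + b^2 - 2ab\cos\theta$, and the law of cosines applied again to the small triangle with vertices $\bar\alpha(t), P, \bar\beta(t)$ yields
$$d_\E(\bar\alpha(t), \bar\beta(t))^2 = (ta)^2 + (tb)^2 - 2(ta)(tb)\cos\theta = t^2 c^2,$$
so $d_\E(\bar\alpha(t), \bar\beta(t)) = tc < \epsilon$. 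This reflects the elementary Euclidean fact that two sides emanating from a common vertex spread apart linearly in the proportional parameter, at a rate equal to the length of the opposite side.

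The CAT(0) condition, applied to the pair of sides sharing the vertex $p$ (a harmless relabeling of the version quoted in the definition above), then yields $d_X(\alpha(t), \beta(t)) \leq tc < \epsilon$ for every $t \in [0,1]$. Since every point of $[p,q]$ is $\alpha(t)$ for some $t$ and the corresponding $\beta(t)$ lies in $[p,r]$, this shows that $[p,q]$ lies in the $\epsilon$-neighborhood of $[p,r]$; the reverse inclusion follows from the same inequality by swapping the roles of $\alpha$ and $\beta$. I do not foresee any real obstacle: the proposition is essentially a one-line consequence of the CAT(0) definition combined with this elementary Euclidean fact, and the only care needed is to apply the comparison inequality at the vertex $p$ rather than the vertex named in the definition.
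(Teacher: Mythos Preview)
Your proof is correct and is exactly what the paper has in mind: the paper's own proof is the single sentence ``This follows directly from the equivalent fact in the comparison triangle,'' and your argument simply makes that sentence explicit by computing $d_\E(\bar\alpha(t),\bar\beta(t))=tc$ and invoking the CAT(0) inequality at the shared vertex $p$.
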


This follows directly from the equivalent fact in the comparison triangle.  Here is a powerful application.

\begin{corollary}
If $X$ is CAT(0) then there is only one geodesic between any two points $p$ and $q$.  
\end{corollary}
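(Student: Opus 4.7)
The plan is to derive the corollary directly from Proposition \ref{vary}, which was just established. That proposition lets us compare geodesics whose endpoints are close; specializing to the case where one endpoint is actually shared will be enough to force two putative geodesics from $p$ to $q$ to coincide pointwise.

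Concretely, I would suppose $c_1, c_2 \colon [0,L] \to X$ are two geodesic segments from $p$ to $q$ parameterized by arc length, where $L = d_X(p,q)$. Fix an arbitrary $\epsilon > 0$ and apply Proposition \ref{vary} with $r := q$, so $d_X(q,r) = 0 < \epsilon$, viewing $c_1$ as $[p,q]$ and $c_2$ as $[p,r]$. The conclusion is that the image of each $c_i$ lies in the $\epsilon$-neighborhood of the other. In particular, for every $t \in [0,L]$ there exists $s \in [0,L]$ with $d_X(c_1(t), c_2(s)) < \epsilon$.

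The next step is to upgrade this Hausdorff-type estimate to a pointwise equality. Since both paths are arc-length parameterized from $p$, the triangle inequality yields
$$|t - s| = \bigl|d_X(p, c_1(t)) - d_X(p, c_2(s))\bigr| \leq d_X(c_1(t), c_2(s)) < \epsilon,$$
so $d_X(c_2(s), c_2(t)) = |s-t| < \epsilon$, and therefore $d_X(c_1(t), c_2(t)) < 2\epsilon$. Letting $\epsilon \to 0$ forces $c_1(t) = c_2(t)$ for every $t$, which is the desired uniqueness.

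There is no real obstacle here; the one mildly delicate point is checking that Proposition \ref{vary} is applicable when the two ``endpoints'' coincide, but this is just the limiting case $\epsilon \to 0^+$ of the hypothesis $d_X(q,r) < \epsilon$. If one preferred not to lean on \ref{vary}, an alternative would be to invoke the CAT(0) comparison inequality directly on the degenerate geodesic triangle with vertices $p, q, q$ whose two nontrivial sides are $c_1$ and $c_2$: the comparison triangle in $\E$ collapses to a single segment, so corresponding points have Euclidean distance zero, and the CAT(0) inequality forces $d_X(c_1(t), c_2(t)) = 0$. The route through \ref{vary} is shorter and uses precisely the tool that was just set up.
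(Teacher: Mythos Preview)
Your argument is correct and follows precisely the route the paper intends: the corollary is presented as an immediate application of Proposition~\ref{vary}, and you have simply spelled out the $\epsilon \to 0$ limiting argument that the paper leaves implicit. The alternative degenerate-triangle argument you mention is also sound, but the derivation from Proposition~\ref{vary} matches the paper's approach exactly.
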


\begin{corollary}
A CAT(0) space is contractible.
\end{corollary}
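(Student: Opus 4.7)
The plan is to produce an explicit straight-line homotopy from the identity on $X$ to a constant map by sliding each point along the unique geodesic to a chosen basepoint. Concretely, I would fix $p_0 \in X$ and, using the previous corollary (uniqueness of geodesics in a CAT(0) space), define $H : X \times [0,1] \to X$ by letting $H(x,t)$ be the point on the unique geodesic $[p_0, x]$ at distance $t \cdot d_X(p_0, x)$ from $p_0$. This map is well-defined precisely because the geodesic from $p_0$ to $x$ exists (since $X$ is a geodesic metric space) and is unique, so there is no ambiguity in parameterizing. The boundary conditions $H(x,0) = p_0$ and $H(x,1) = x$ are immediate from the definition, and the constant-$x$ slices are continuous by construction since they are unit-speed-rescaled geodesics.

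The content of the proof is the joint continuity of $H$ in $(x,t)$. I would estimate $d_X(H(x,t), H(x',t'))$ by inserting the intermediate point $H(x, t')$ and using the triangle inequality
\[
d_X(H(x,t), H(x',t')) \le d_X(H(x,t), H(x,t')) + d_X(H(x,t'), H(x',t')).
\]
The first term is bounded by $|t - t'| \cdot d_X(p_0, x)$ since $s \mapsto H(x,s)$ parameterizes a geodesic of length $d_X(p_0,x)$. For the second term, Proposition \ref{vary} says that if $x'$ is close to $x$ then the geodesic $[p_0, x']$ stays within $d_X(x, x')$ of $[p_0, x]$; combined with the comparison-triangle picture, this forces the corresponding points at parameter $t'$ on the two geodesics to be close as well, so the second term goes to $0$ as $x' \to x$.

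The main obstacle is precisely turning Proposition \ref{vary} into a quantitative statement about corresponding points on the two geodesics rather than mere neighborhood containment. I would handle this by passing to the comparison triangle in $\E$ with vertices $P_0, X, X'$ (the images of $p_0, x, x'$), where the analogous interpolation is affine; there the distance between the points at the same rescaled parameter $t'$ along $[P_0, X]$ and $[P_0, X']$ is exactly $t' \cdot d_\E(X, X') = t' \cdot d_X(x, x')$, and the CAT(0) inequality transfers this bound back into $X$. Once that estimate is in hand, the two terms together give continuity of $H$, and contractibility follows.
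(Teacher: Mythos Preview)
Your proposal is correct and follows essentially the same approach as the paper: contract along unique geodesics to a basepoint, with continuity coming from the CAT(0) comparison inequality (the paper's one-line proof simply cites Proposition~\ref{vary} and continuity of geodesics). Your version is more detailed and in fact sharpens the estimate---by working directly in the comparison triangle you obtain the convexity bound $d_X(H(x,t'),H(x',t')) \le t'\, d_X(x,x')$ rather than merely the $\epsilon$-neighborhood statement of Proposition~\ref{vary}---but the underlying idea is identical.
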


The contraction moves each point to the basepoint along the unique geodesic between them at constant speed.  Continuity follows directly from Proposition \ref{vary} and the fact that geodesics are continuous maps.

\begin{prop}\label{local}
In a CAT(0) space, local geodesics are (global) geodesics, and are thus unique.
\end{prop}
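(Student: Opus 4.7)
The plan is to parameterize the local geodesic $c:[0,L]\to X$ by arc length and consider the set $S = \{t \in [0,L] : d(c(0), c(t)) = t\}$. I would let $T = \sup S$ and aim to show $T = L$, so that $c$ realizes the distance between its endpoints and is itself length-minimizing. By the local geodesic property at $0$, $S$ contains some interval $[0, \delta]$, so $T > 0$; continuity of $c$ and of the distance function gives $T \in S$. Observe moreover that if $t \in S$ and $t' < t$, then $c|_{[0,t]}$ is length-minimizing of length $t$, so its restriction $c|_{[0,t']}$ is also length-minimizing, giving $t' \in S$; hence $S = [0, T]$.

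Assume for contradiction that $T < L$. The local geodesic hypothesis at $T$ yields $\epsilon > 0$ such that $c|_{[T-\epsilon,\, T+\epsilon]}$ is a genuine geodesic. Fix $s \in (0, \epsilon)$ with $s < T$ and $T+s \le L$, and set $p = c(0)$, $q = c(T-s)$, $r = c(T+s)$. Then $d(p,q) = T-s$, $d(q,r) = 2s$, and $d := d(p,r) \le T+s$ by the triangle inequality. Build the comparison triangle $\triangle PQR$ in $\E$ with matching side lengths, and let $M$ be the midpoint of $[Q,R]$. Because the restricted local geodesic from $q$ to $r$ is the unique geodesic joining these points in $X$, its midpoint in $X$ is exactly $c(T)$, and by hypothesis $d(p, c(T)) = T$. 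The CAT(0) inequality applied with $x = p$ and $y = c(T)$ then gives $T \le |PM|$, while Apollonius's median identity in the Euclidean triangle gives
\[
|PM|^2 = \tfrac{1}{2}(T-s)^2 + \tfrac{1}{2}d^2 - s^2.
\]
Combining $T^2 \le |PM|^2$ with this identity and simplifying yields $d^2 \ge (T+s)^2$, so $d = T+s$ and hence $T+s \in S$, contradicting $T = \sup S$. Therefore $T = L$ and $c$ is a global geodesic.

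Uniqueness is then immediate from the earlier corollary that geodesics between any two points in a CAT(0) space are unique. The only step that demands genuine care is the identification of $c(T)$ as the midpoint of the geodesic from $q$ to $r$: this uses uniqueness of geodesics applied to the restricted local geodesic on $[T-s, T+s]$, and without it the median computation would not pin down $|PM|$ in terms of $T$. Once that identification is in hand, the contradiction reduces to a one-line Euclidean computation with the median length formula.
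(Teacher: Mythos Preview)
Your argument is correct. The paper itself does not supply a proof of this proposition: it is listed among the standard background facts in Section~2.2, with the implicit reference being \cite[II.1.4]{bh99}. So there is no ``paper's proof'' to compare against; I will simply assess yours.

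The connectedness argument is sound. The only point worth a remark is the application of the CAT(0) comparison inequality at step $d(p,c(T)) \le |PM|$: as you note, this requires that $c(T)$ actually lie on the geodesic side $[q,r]$ of the triangle $\triangle(p,q,r)$, not merely on some path from $q$ to $r$. You correctly justify this by invoking the uniqueness-of-geodesics corollary already established in the paper, which forces $c|_{[T-s,T+s]}$ to coincide with that side. The median computation then does the rest.

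For comparison, the textbook argument in Bridson--Haefliger runs via the convexity of $t \mapsto d(c(t),c'(t))$ for a pair of geodesics $c,c'$, applied locally; your direct comparison-triangle computation with Apollonius's identity is a perfectly good substitute and arguably more self-contained given what the paper has already set up.
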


Non-positive curvature can also be used to study covering spaces and fundamental groups.  One critical ingredient is the following generalization of a theorem from Riemannian geometry, whose name it also shares.

\begin{thm}[The Cartan-Hadamard Theorem for Non-Positively Curved Spaces] \cite[Theorem 1]{AB90}
If $X$ is non-positively curved, then the universal cover $\widetilde{X}$ is CAT(0).
\end{thm}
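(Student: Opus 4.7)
The plan is to upgrade the local CAT(0) condition on $X$ to the global CAT(0) condition on $\widetilde{X}$ by using simple connectivity to patch local information together. First I would lift the metric from $X$ to $\widetilde{X}$ so that the covering projection $p: \widetilde{X} \to X$ is a local isometry; concretely, for $\tilde{x},\tilde{y} \in \widetilde{X}$ one defines $d(\tilde{x},\tilde{y})$ to be the infimum of lengths of lifts of paths from $p(\tilde{x})$ to $p(\tilde{y})$. Since CAT(0) is a local condition, this makes $\widetilde{X}$ locally CAT(0) as well, and small balls lift isometrically from $X$.

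Second, I would establish the existence of geodesics between arbitrary points. Given $\tilde{x},\tilde{y}$, take any rectifiable path between them, subdivide it so that each piece lies in a local CAT(0) ball, and replace each piece with its local geodesic (which exists and is unique by Proposition \ref{local} applied locally). An Arzel\`a-Ascoli argument on the space of such broken local geodesics then produces an actual local geodesic joining $\tilde{x}$ to $\tilde{y}$.

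Third, and this is the core of the theorem, I would show that local geodesics in $\widetilde{X}$ are actually global geodesics, and that any two points are joined by a unique one. Here is where simple connectivity is essential. Given two local geodesics $\gamma_0, \gamma_1$ with the same endpoints, one uses simple connectivity to build a homotopy between them, subdivides the homotopy finely enough that each small square lies in a CAT(0) ball, and uses the local uniqueness of geodesics on each square to propagate coincidence across the homotopy. Equivalently, one constructs a ``developing map'' from a model space of local geodesics into $\widetilde{X}$ and shows it is a bijective local isometry. Once uniqueness is in place, a standard shortening argument shows each local geodesic is length-minimizing.

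Finally, I would verify the global CAT(0) inequality. Given a geodesic triangle $T$ in $\widetilde{X}$, subdivide it barycentrically into smaller triangles each contained in a local CAT(0) ball; the Alexandrov patching lemma then combines the local comparison inequalities to produce the global one for $T$. The main obstacle is clearly the third step: promoting uniqueness and minimality of local geodesics to the global setting. This is the analog in the metric category of the fact that, in Riemannian geometry, the exponential map of a simply connected non-positively curved manifold is a diffeomorphism, and it is the step that fails without simple connectivity (small loops prevent local geodesics from being globally minimizing).
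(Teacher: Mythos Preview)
The paper does not prove this theorem at all: it is stated as a background result with a citation to \cite{AB90} and no argument is given. There is therefore no ``paper's own proof'' to compare your proposal against.

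That said, your outline is essentially the standard strategy (as presented, for instance, in Bridson--Haefliger II.4), and the identification of step three as the crux is correct. Two technical points are worth flagging. First, the Arzel\`a--Ascoli argument in step two typically requires a completeness or properness hypothesis, which the paper's statement omits but the original source includes; without it one usually replaces the compactness argument by a Birkhoff curve-shortening process. Second, your step four is slightly out of order in the usual presentation: once uniqueness of geodesics is established, the global CAT(0) inequality is obtained not by barycentrically subdividing an arbitrary triangle, but by first proving it for sufficiently small triangles and then using Alexandrov's patching lemma along a single geodesic side to handle large triangles. Your ``subdivide the whole triangle'' phrasing would require knowing in advance that the interior geodesics exist and vary continuously, which is precisely what you are trying to establish.
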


The following standard application illustrates the connection between metric and group theoretic aspects of maps.  A proof is in \cite[II.4.14]{bh99}.

\begin{prop}
\label{isometric embedding}
If $X$ is non-positively curved, $Y$ is a geodesic metric space and $f: Y \to X$ is a locally isometric embedding, then the induced map $f_* :\pi_1(Y) \to \pi_1(X)$ is injective.
\end{prop}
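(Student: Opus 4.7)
The plan is to lift $f$ to a map between universal covers and exploit the CAT(0) geometry of $\widetilde{X}$. By the Cartan--Hadamard theorem, $\widetilde{X}$ is CAT(0). Since $Y$ is a geodesic metric space and $f$ is a local isometry, the universal cover $\widetilde{Y}$ is also a geodesic metric space, and $f$ admits an equivariant lift $\widetilde{f}\colon \widetilde{Y} \to \widetilde{X}$ which is again a locally isometric embedding. Equivariance means $\widetilde{f}(g \cdot \widetilde{y}) = f_*(g) \cdot \widetilde{f}(\widetilde{y})$ for every $g \in \pi_1(Y)$. The goal is to show that $\widetilde{f}$ is injective; once this is known, injectivity of $f_*$ follows from equivariance and freeness of the deck action.

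To prove $\widetilde{f}$ is injective, fix distinct $\widetilde{p}, \widetilde{q} \in \widetilde{Y}$ together with a geodesic segment $[\widetilde{p}, \widetilde{q}]$ between them, which has positive length. Because $\widetilde{f}$ is locally isometric, every point of $[\widetilde{p}, \widetilde{q}]$ has a neighborhood on which $\widetilde{f}$ is an isometric embedding, so the image $\widetilde{f}([\widetilde{p}, \widetilde{q}])$ is a local geodesic in $\widetilde{X}$ of the same positive length, joining $\widetilde{f}(\widetilde{p})$ to $\widetilde{f}(\widetilde{q})$. By Proposition~\ref{local}, local geodesics in the CAT(0) space $\widetilde{X}$ are global geodesics, so $\widetilde{f}([\widetilde{p}, \widetilde{q}])$ is a genuine geodesic of positive length and its endpoints must be distinct. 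Hence $\widetilde{f}(\widetilde{p}) \neq \widetilde{f}(\widetilde{q})$.

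To finish, suppose $[\gamma] \in \pi_1(Y, y_0)$ satisfies $f_*[\gamma] = 1$. Fix a lift $\widetilde{y}_0 \in \widetilde{Y}$ of $y_0$. Then equivariance of $\widetilde{f}$ gives
$$\widetilde{f}([\gamma] \cdot \widetilde{y}_0) \;=\; f_*[\gamma] \cdot \widetilde{f}(\widetilde{y}_0) \;=\; \widetilde{f}(\widetilde{y}_0).$$
Injectivity of $\widetilde{f}$ forces $[\gamma] \cdot \widetilde{y}_0 = \widetilde{y}_0$, and since the action of $\pi_1(Y)$ on $\widetilde{Y}$ is free, $[\gamma] = 1$.

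The step I expect to be most delicate is verifying that $\widetilde{f}$ carries geodesics to local geodesics; this rests on the precise interpretation of ``locally isometric embedding'' and in particular on the fact that sufficiently short subarcs of a geodesic embed isometrically. Once that is in hand, the CAT(0) rigidity of $\widetilde{X}$ --- specifically that local geodesics are global, so a path of positive length cannot return to its starting point --- supplies essentially all of the remaining force of the argument.
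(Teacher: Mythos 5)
Your argument is correct and is essentially the approach the paper intends: the paper gives no proof of its own (it defers to Bridson--Haefliger II.4.14), and the paragraph immediately following the proposition invokes exactly your lift $\widetilde{f}\colon \widetilde{Y}\to\widetilde{X}$ of universal covers, with the CAT(0) fact that local geodesics are global (Proposition \ref{local}) doing the real work. The one point to shore up is the unsupported claim that $\widetilde{Y}$ exists and is a geodesic space --- for a bare geodesic metric space $Y$ this is not automatic and needs a little more (e.g.\ completeness and local compactness, or non-positive curvature of $Y$, as in the cited reference and in every application in this paper); with that granted, the rest is exactly the standard argument.
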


Furthermore, by the the \includegraphics{vs.pdf}varc-Milnor Lemma, the map $f_*$ is a quasi-isometric embedding, since it is the composition of two quasi-isometries and an isometric embedding:
\begin{align*}
\pi_1(Y) \to \widetilde{Y} \to \widetilde{X} \to \pi_1(X)
\end{align*}

\subsection{Isometries of CAT(0) spaces}

Like in the hyperbolic plane, isometries of a CAT(0) space are classified as either elliptic, parabolic, or hyperbolic.  

If $G$ is a group acting properly by isometries on $X$ such that $X/G$ is compact, then a standard result \cite[II.6.10.2]{bh99} is that all $g \in G$ are either elliptic or hyperbolic.  If $G$ is a deck group, then it acts freely and all its isometries are hyperbolic.

A hyperbolic isometry of the hyperbolic plane has an axis, a bi-infinite geodesic preserved by the isometry. 

If $g$ is a hyperbolic isometry of a CAT(0) space, then it also has an axis.  This axis may not be unique, but any two axes lie in bounded neighborhoods of each other.  This is clear since $g$ preserves distance.  A CAT(0) argument shows that any two axes are actually parallel (see \cite[II.6.8]{bh99}).

\subsection{Cube complexes}

\begin{defn}
A \emph{cube complex} is a metric space $X$ that is the union of metric cubes $[-\frac{1}{2}, \frac{1}{2}]^n$ glued by isometries of their faces.
\end{defn}

Finite dimensional cube complexes are geodesic metric spaces \cite[I.7.19]{bh99}.

Given a RAAG $A_\Gamma$, we take a cube $[-\frac{1}{2}, \frac{1}{2}]^{\#V}$ with the euclidean metric and with opposite faces identified to make an $n$-torus.  The faces of this torus form a complex of metric cubes, specifically, each edge is an interval associated to an element of $V$.  We consider the subcomplex of only those cubes whose edges all commute pairwise in $A_\Gamma$.

This subcomplex has fundamental group $A_\Gamma$.  We call it the \emph{Salvetti complex}, which we will denote $X_\Gamma$.   With the path metric, $X_\Gamma$ satisfies Gromov's link condition \cite[4.2c]{Gro87} for a non-positively curved cube complex \cite[3.1.1]{CD95}.  

In addition, by the \includegraphics{vs.pdf}varc-Milnor Lemma, the universal cover $\widetilde{X_\Gamma}$ is quasi-isometric to $A_\Gamma$.

\subsection{Hyperplanes}

\begin{defn}
Given a metric $n$-cube: $[-\frac{1}{2}, \frac{1}{2}]^{n}$, we define the $i^{th}$ \emph{midcube} to be a set of points that is $0$ in the $i^{th}$ coordinate.  A midcube naturally inherits the metric and cell structure of an $(n-1)$-cube. 

In a cube complex $X$ we define a relation on all midcubes in $X$.  Two midcubes are related if and only if they share a vertex.  There is a unique weakest equivalence relation generated by this relation.  This equates any midcubes that can be connected by a succession of vertices.  The union of cubes in an equivalence class is a \emph{hyperplane}.  Figure \ref{hyperplanefig} shows examples in a small complex.  

We'll say that the vertices of $X$ that belong to edges of $X$ crossing a hyperplane $h$ are \emph{adjacent} to $h$.
\end{defn}

\begin{figure}[ht]
\centering
\def\svgwidth{90mm}
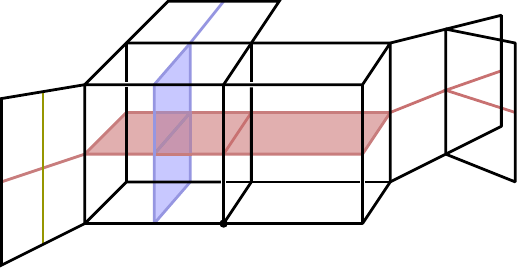
\caption[3 hyperplanes in a cube complex]{3 hyperplanes in a cube complex. $p$ is adjacent to $h_1$ and $h_2$ but not $h_3$.}
\label{hyperplanefig}
\end{figure}

If $X$ is CAT(0) then a hyperplane $h$ contains at most one midcube of a given cube, and divides $X$ into two connected components \cite[4.10]{Sag95}.  The cubes containing $h$ form an $h \times I$ neighborhood with a metric $\sqrt{d_h^2 + d_I^2}$.  This makes geodesics near $h$ very easy to compute, and since geodesic segments are unique in $X$ we obtain the following standard results:

\begin{enumerate}
\item If $p, q \in h$, then the geodesic segment from $p$ to $q$ (in $X$) is also contained in $h$.  Specifically, $h$ is simply connected.
\item A geodesic in $X$ is either contained in $h$, disjoint from $h$ or intersects $h$ transversely at a single point.
\end{enumerate} 

We'll make use of the following lemma in our proof of the main theorem.

\begin{lem}
\label{transverse}
If $L$ is a bi-infinite geodesic in a CAT(0) cube complex that transversely intersects a hyperplane $h$,
then the distances from points on $L$ to $h$ is unbounded, traveling in either direction along $L$.
\end{lem}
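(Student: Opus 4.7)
My plan is to exploit the convexity of the hyperplane $h$ together with the classical fact that, in a CAT(0) space, the distance function to a convex subset is convex along any geodesic. The problem then reduces to an observation about convex functions on $\mathbb{R}$.

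First, I would parameterize $L$ by arc length so that $L(0) = p$ is the transverse intersection point $L \cap h$; by property (2) stated just before the lemma, a transverse intersection of a geodesic with $h$ consists of the single point $p$. Set $f(t) = d(L(t), h)$. The hyperplane $h$ is convex in the CAT(0) cube complex $X$ by property (1) above the lemma (geodesic segments between points of $h$ remain in $h$), so the function $f : \mathbb{R} \to \mathbb{R}_{\ge 0}$ is convex, a standard consequence of the CAT(0) inequality (cf.\ \cite[II.2.5]{bh99}). Because $L \cap h = \{p\}$, we have $f(0) = 0$ and $f(t) > 0$ for every $t \neq 0$.

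A nonnegative convex function on $\mathbb{R}$ vanishing only at $0$ and taking some positive value must go to $+\infty$ in both directions. Explicitly, for any $\epsilon > 0$, the identity $\epsilon = (1 - \epsilon/t)\cdot 0 + (\epsilon/t)\cdot t$, combined with convexity of $f$ and $f(0)=0$, gives
$$f(\epsilon) \leq (\epsilon/t)\,f(t) \quad \text{for } t > \epsilon,$$
so $f(t) \geq (t/\epsilon)\,f(\epsilon) \to \infty$ as $t \to \infty$. Applying the symmetric estimate at a negative parameter handles $t \to -\infty$.

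The only substantive input beyond the setup already laid out in the paper is the convexity of $t \mapsto d(L(t), h)$; once that is in hand, the conclusion is immediate from one-variable convexity. So the ``hard part,'' insofar as there is one, is simply citing the right CAT(0) lemma and verifying the convex-set hypothesis, both of which follow directly from what is given.
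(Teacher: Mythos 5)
Your proof is correct, and in substance it establishes exactly the same estimate as the paper, just packaged through a cited general lemma rather than a hand-built comparison. The paper fixes $q$ on $L$ at distance $1$ from $p = L\cap h$, takes a nearest point $r'\in h$ to a far point $r$ on $L$, and uses the Euclidean comparison triangle for $(p,r,r')$ to get $d(q,h)\le \frac1t d(r,h)$; that inequality is precisely the convexity inequality $f(1)\le \frac1t f(t)$ for $f=d(\cdot,h)$ with $f(0)=0$, which you obtain instead by invoking the standard CAT(0) fact that the distance to a convex subset is a convex function (convexity of $h$ being the paper's standard fact (1)) and then running the elementary one-variable convexity argument. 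Your route is more modular and shorter; the paper's is self-contained, effectively reproving the needed instance of that convexity lemma. Two points you use implicitly and could state: positivity of $f$ away from $0$ needs that $h$ is closed (so distance zero forces membership, and transversality gives $L\cap h=\{p\}$ by the paper's fact (2)), and the convexity-of-$d(\cdot,C)$ statement in Bridson--Haefliger is phrased for convex subsets complete in the induced metric, a hypothesis hyperplanes in a finite-dimensional CAT(0) cube complex do satisfy.
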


\begin{proof}
Suppose $L\cap h =p$.  Take $q \in L$ at distance $1$ from $p$.  Now take $r \in L$ distance $t>1$ from $p$ along the same direction as $q$. Let $r'$ be a point on $h$ of minimum distance to $r$.    

Take the comparison triangle $(P, R, R')$ and let $Q'$ be the point on  $[P,R']$ with $d(P, Q')=\frac{1}{t} d(P, R'$).  The corresponding point $q'$ lies in $h$ along the geodesic from $p$ to $r'$.  Then we have 
$$d(q, h)  \leq d(q, q') \leq d(Q, Q') = \frac{1}{t} d(R,R') = \frac{1}{t} d(r,h).$$ 

Since $d(q, h)$ is a constant, we can make $r$ arbitrarily far from $h$ by increasing $t$.
\end{proof}

\begin{figure}[ht]
\centering
\def\svgwidth{90mm}
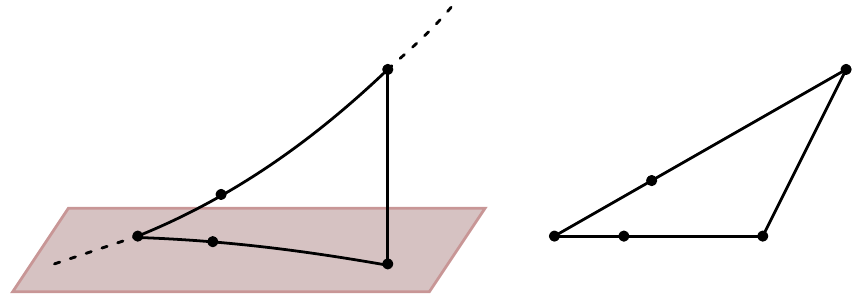
\caption{The comparison triangle for Lemma \ref{transverse}}
\label{transversefigure}
\end{figure}

Here is an immediate consequence.  

\begin{corollary}
If $L_1$ and $L_2$ are axes of $w$, then $L_1$ properly intersects $h$ if and only if $L_2$ does.
\end{corollary}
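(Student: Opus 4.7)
The plan is to prove the contrapositive using the trichotomy already established for a geodesic relative to a hyperplane in a CAT(0) cube complex: any geodesic is either contained in $h$, disjoint from $h$, or meets $h$ transversely at a single point. Assuming $L_1$ meets $h$ transversely, I will rule out the first two options for $L_2$ and conclude that $L_2$ must also cross $h$ transversely.

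The essential input is the fact recalled in the subsection on isometries: any two axes of a hyperbolic isometry $w$ lie in bounded neighborhoods of each other. Fix a constant $D$ such that every point of $L_1$ lies within $D$ of $L_2$ and vice versa. Applying Lemma \ref{transverse} to $L_1$, the distance $d(\cdot,h)$ is unbounded in both directions along $L_1$ from the transverse crossing point.

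For the first alternative, if $L_2 \subset h$, then every point of $L_1$ would lie within $D$ of a point of $L_2 \subset h$, and thus within $D$ of $h$, directly contradicting the unboundedness supplied by Lemma \ref{transverse}. For the second alternative, if $L_2 \cap h = \emptyset$, then $L_2$ lies in a single component of $X \setminus h$ by the standard fact that $h$ separates $X$. The transverse crossing of $L_1$ at a point $p$ places points of $L_1$ on both sides of $h$, and Lemma \ref{transverse} allows us to pick a point $r \in L_1$ on the side opposite $L_2$ with $d(r,h) > D$. Any path from $r$ to a point in the component containing $L_2$ must cross $h$, so it has length at least $d(r,h) > D$, contradicting the fact that $r$ lies within $D$ of some point of $L_2$.

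The main obstacle I expect is the second case: the statement that any path from $r$ to the other component must traverse $h$, and hence has length at least $d(r,h)$, needs to be invoked carefully. It follows from property (2) preceding Lemma \ref{transverse} together with the component-separation property of hyperplanes cited from \cite{Sag95}, and the argument really is just the observation that $h$ is a topological barrier of zero width between the two components.
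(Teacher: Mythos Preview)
Your proof is correct and is exactly the argument the paper has in mind: the corollary is stated as an ``immediate consequence'' of Lemma~\ref{transverse} together with the fact that two axes of $w$ stay a bounded distance apart, and what you have written is a careful unpacking of that immediacy. The two-case split (ruling out $L_2\subset h$ by bounding $d(L_1,h)$, and ruling out $L_2\cap h=\emptyset$ by producing a point of $L_1$ on the far side at distance greater than $D$ from $h$) is the natural way to make the inference explicit, and your handling of the second case via the separation property of $h$ is fine.
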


Furthermore $h$ has the structure of a cube complex, given by the midcubes it comprises.  If $X$ is CAT(0), it is straightforward to check that so is $h$. If $w$ is a hyperbolic isometry of $X$ preserving $h$, then $w$ has an axis in $h$ which must also be an axis in $X$.

\subsection{Special cube complexes}

The theory of special cube complexes was introduced by Haglund and Wise \cite{HW08}.  Special cube complexes are cube complexes whose hyperplanes avoid certain pathologies.  They are defined in such a way that guarantees a locally isometric embedding in the Salvetti complex of some RAAG. Thus if $C$ is a special cube complex, then there is an injective homomorphism from $\pi_1(C)$ to some RAAG.

Later, Hsu and Wise used special cube complexes to show that many free by cyclic graph groups virtually embed in RAAGs.  

\begin{thm}\label{hsuwise} \cite{HW10}
Let $G$ be a graph of groups with free vertex groups and cyclic edges groups.  Suppose $G$ contains no Baumslag-Solitar groups, or equivalently is word hyperbolic.  Then $G$ acts on a cube complex $C$, and $G$ contains a finite index subgroup $H$ whose action on $C$ is special.  As a result, $H$ quasi-isometrically embeds in some RAAG.
\end{thm}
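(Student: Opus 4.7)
The plan is to realize the RAAG embedding as a three-stage pipeline: (i) cubulate $G$ to obtain a proper cocompact action on a CAT(0) cube complex $C$, (ii) pass to a finite-index subgroup $H \le G$ whose action on $C$ is special in the sense of Haglund--Wise, and (iii) invoke the defining feature of a special cube complex to produce a local isometry of $H \backslash C$ into a Salvetti complex and apply the Svarc--Milnor lemma, as in Proposition \ref{isometric embedding} and the display following it, to upgrade the resulting injection $H \hookrightarrow A_\Gamma$ to a quasi-isometric embedding.

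For step (i), I would apply Sageev's dual cube complex construction to a $G$-invariant wall-space. The Bass--Serre tree of the graph-of-groups decomposition supplies one family of walls, stabilized by conjugates of the cyclic edge groups; each free vertex group contributes further codimension-$1$ subgroups coming from free splittings, which can be assembled equivariantly into a global wall-space on $G$. Finite-dimensionality, properness, and cocompactness of the resulting action on $C$ should follow from word hyperbolicity of $G$ (guaranteed by the absence of Baumslag--Solitar subgroups via a Bestvina--Feighn style combination theorem) together with finiteness of the underlying graph, which forces wall-stabilizers to be quasiconvex and to fall into finitely many $G$-orbits.

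For step (ii), specialness is obstructed by four hyperplane pathologies: self-intersection, one-sidedness, direct self-osculation, and inter-osculation. Each pathology is witnessed by a finite configuration in $C$ whose projection to $G \backslash C$ is a coincidence between translates of hyperplane stabilizers. Since there are only finitely many $G$-orbits of such potential pathologies, the strategy is to separate, for each one, the offending group element from the relevant hyperplane stabilizer by means of a finite-index subgroup of $G$, and then take $H$ to be the intersection of these finitely many subgroups; its action on $C$ will then be special. Step (iii) is then essentially automatic: the quotient $H \backslash C$ admits a local isometry into a Salvetti complex $X_\Gamma$, where $V(\Gamma)$ indexes the $H$-orbits of hyperplanes and $E(\Gamma)$ records transverse crossings, and Proposition \ref{isometric embedding} together with the Svarc--Milnor argument that follows it delivers the quasi-isometric embedding $H \to A_\Gamma$.

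The main obstacle is step (ii), and specifically establishing that the relevant hyperplane stabilizers are separable in $G$. Cubulation via Sageev is relatively soft, and the consequences of specialness are formal, but separability of quasiconvex subgroups of a word-hyperbolic group is deep: this is precisely where the hypothesis ruling out Baumslag--Solitar subgroups is essential, since Baumslag--Solitar subgroups embed infinite cyclic subgroups non-separably and obstruct any attempt to remove hyperplane pathologies by passing to a finite-index cover. In practice one would import this separability from Wise's malnormal special quotient theorem and the canonical completion and retraction machinery, specialized to the graph-of-groups structure at hand.
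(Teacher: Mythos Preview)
The paper does not prove this statement: Theorem~\ref{hsuwise} is quoted from \cite{HW10} as background and is used only as a black box (in Section~5, to assert that $\pi_1(M)$ has a finite-index subgroup embedding in a RAAG). There is therefore no ``paper's own proof'' to compare your proposal against.

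That said, your three-stage outline is a fair caricature of the Hsu--Wise strategy, and step~(iii) is correctly identified as formal once specialness is in hand. Two remarks on accuracy. First, your step~(i) is somewhat more abstract than what Hsu and Wise actually do: rather than invoking Sageev's construction on a generic wall-space, they build the cube complex fairly explicitly from the graph-of-spaces model, with hyperplanes arising from edge spaces and from carefully chosen immersed walls in the vertex graphs. Second, and more substantively, your proposed mechanism for step~(ii) is anachronistic: the malnormal special quotient theorem and the full quasiconvex-separability machinery belong to Wise's later hierarchy work, not to \cite{HW10}. In the original paper, virtual specialness is obtained by hands-on arguments tailored to the free-vertex, cyclic-edge setting---in particular by exploiting subgroup separability properties already known for free groups (Marshall Hall's theorem and its consequences) to clean up the finitely many hyperplane pathologies. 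So while your diagnosis that separability is the crux is correct, the tool you name is heavier and later than what the cited result actually uses; invoking it here would be circular in spirit, since the Hsu--Wise theorem is one of the inputs that motivated and fed into that later machinery.
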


This theorem applies to a free by cyclic HNN extension that we'll use as an example later. 

A final application is the following result, due to Ian Agol.  It is part of his proof of the virtual Haken conjecture.

\begin{thm}\cite{AGM12}
Let $C$ be a non-positively curved cube complex such that $\pi_1(C)$ is word hyperbolic.  Then $C$ has a finite cover which is special.
\end{thm}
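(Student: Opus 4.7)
The goal is to find a finite-index subgroup $H \leq \pi_1(C)$ whose corresponding cover $C_H \to C$ is special, i.e., eliminates the four hyperplane pathologies characterizing specialness: one-sidedness, self-intersection, direct self-osculation, and inter-osculation. Since $\pi_1(C)$ acts properly and cocompactly on the CAT(0) cube complex $\widetilde{C}$ and is word hyperbolic, hyperplane stabilizers are quasi-convex by standard arguments, and this is the essential input that makes the problem tractable at all.

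The plan is two-phase. First, handle the two local pathologies (one-sidedness and self-intersection) by passing to a finite cover that splits each hyperplane from its commensurator-equivalent translates; this step rests on separability of hyperplane stabilizers from small finite sets of elements, which is the mildest form of residual finiteness one needs. Second, for the remaining osculation pathologies, invoke Wise's Malnormal Special Quotient Theorem, which produces virtually special quotients of a hyperbolic virtually special group by killing sufficiently deep finite-index normal subgroups of a malnormal collection of quasi-convex subgroups. One then sets up an inductive scheme over the hyperplane-orbit complexity of $\widetilde{C}/\pi_1(C)$, applying MSQT along carefully chosen collections of hyperplane stabilizers so that at each stage the quotient remains hyperbolic, virtually special, and strictly simpler, until the original $\pi_1(C)$ is recovered as a subgroup of one of these virtually special groups.

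The main obstacle, and Agol's central contribution, is that hyperplane stabilizers in $\pi_1(C)$ are not in general malnormal, so MSQT cannot be applied directly. Agol's resolution is a weak separation theorem for cubulated hyperbolic groups: any finite pattern of double cosets of quasi-convex subgroups can be simultaneously separated in a suitable finite cover. Combined with a delicate coloring argument on hyperplane orbits, this manufactures the effective malnormality needed to feed into MSQT at each inductive stage. I would not expect to reproduce this without the full apparatus of Wise's quasi-convex hierarchy theorem and cubical small cancellation theory; the overall strategy is clear, but this separation-plus-coloring machinery is where the genuine depth of the theorem lies and is not something one could write down from first principles in a short proof.
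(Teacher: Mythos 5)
This statement is quoted in the paper as background from \cite{AGM12}; the paper gives no proof of it (indeed it remarks that the result is a pure existence statement), so there is no internal argument to compare yours against. Judged on its own terms, what you have written is an accurate survey of the architecture of Agol's published proof --- quasiconvexity of hyperplane stabilizers, Wise's Malnormal Special Quotient Theorem, a weak separation statement, and an inductive/coloring scheme --- but it is not a proof. Every load-bearing step (MSQT, the separation theorem, the coloring argument, the induction that terminates with $\pi_1(C)$ inside a virtually special group) is named rather than stated precisely or carried out, and you say as much yourself. For the purposes of this exercise that is a genuine gap, not a stylistic shortfall.

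One concrete point to correct in the sketch itself: your ``first phase'' proposes to kill one-sidedness and self-intersection using separability of hyperplane stabilizers, described as ``the mildest form of residual finiteness one needs.'' For a word-hyperbolic group acting properly cocompactly on a CAT(0) cube complex, no such separability --- indeed not even residual finiteness --- is known a priori; separability of quasiconvex subgroups is essentially \emph{equivalent} to the conclusion of the theorem (via Haglund--Wise \cite{HW08}), so taking it as an input in phase one risks circularity. In the actual argument of \cite{AGM12}, the separation statements are outputs of the MSQT-based machinery (the weak separation theorem proved in the appendix), and the local pathologies are not disposed of in a preliminary elementary step; the whole proof runs through the hierarchy/coloring induction at once. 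So beyond being incomplete, the proposed decomposition into an ``easy separability phase'' followed by an ``MSQT phase'' would not go through as ordered.
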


These two theorems are statements of existence.  Neither gives a method for finding the desired cover.  

\section{$X^{(1)}$ geodesics}

\subsection{Interaction with hyperplanes}
In this section, $A$ will denote a RAAG.  We'll use $X$ to refer to the universal cover of its Salvetti complex.

Most of our arguments exploit our ability to relate the combinatorics of $A$ to the geometry of $X$.  We can, in fact, see the group directly in $X$ by tracing our paths not in $X$ but in its $1$-skeleton.  

We'll call paths in $X$ that stay in $X^{(1)}$ and don't change direction mid-edge \emph{edge paths}.  Edge paths between vertices are nothing more than words in the generators of $A$.   The path follows the edge corresponding to each letter in turn.  

Unless $A$ is free, $X^{(1)}$ is not CAT(0) and does not have unique geodesics.  We'll use $[p, q]$ to denote some choice of minimal length path between $p$ and $q$ in $X^{(1)}$.  We'll call this an \emph{$X^{(1)}$ geodesic segment}.  Given an edge path from $p$ to $q$ encoded by $w \in A$, finding an $X^{(1)}$ geodesic $[p,q]$ is equivalent to finding a shortest form for $w$.   The distance from $1$ to $w$ in the word metric, denoted $|w|$ is also the length of $[p,q]$.  Here are some quick results connecting $X^{(1)}$ geodesics segments to hyperplanes.

Observe that an edge path is transverse to the hyperplanes of $X$.  Furthermore, two vertices of $X$ lie on the same side of a hyperplane $h$ if and only if every edge path between them crosses $h$ an even number of times.

\begin{lem}
\label{once}
An $X^{(1)}$-geodesic segment crosses no hyperplane more than once. 
\end{lem}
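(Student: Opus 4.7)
I would argue by contradiction: suppose the $X^{(1)}$-geodesic $\gamma = [p,q]$ crosses some hyperplane $h$ at two edges, say $e_i$ and $e_j$ with $i<j$. Among all such pairs, choose one with $j-i$ minimal, so no edge of $\gamma$ strictly between $e_i$ and $e_j$ is dual to $h$. The goal is to produce an edge path from $p$ to $q$ of length strictly less than $|\gamma|$, contradicting minimality.

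The first observation is that $\gamma$ spells a word $w = s_1 s_2 \cdots s_n$ in the generators (and their inverses) of $A$, with $s_k$ the generator labeling edge $e_k$ (signed by direction of traversal). Since $h$ is dual to a unique generator $v$, both $s_i$ and $s_j$ lie in $\{v,v^{-1}\}$. As $h$ separates $X$, the two crossings of $\gamma$ through $h$ must occur in opposite directions, forcing $s_j = s_i^{-1}$.

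The main step is showing that every intermediate letter $s_k$ (for $i<k<j$) is a generator commuting with $v$. I would use the key fact that two consecutive edges of an edge path with dual hyperplanes meeting in a square can be swapped (a commutation move), since their labels commute in $A$. Starting from $e_i$, I would try to push it rightward through $e_{i+1}, \ldots, e_{j-1}$ by successive swaps. Since $e_i$ and $e_j$ are dual to the same hyperplane $h$, they lie in the same parallelism class of edges in $X$; the cube-complex structure gives a chain of squares realizing this parallelism, and each such square corresponds exactly to a commutation of $v$ with the label of an intermediate edge. Equivalently, the intermediate edges live in the collar neighborhood structure of $h$ (viewed through the sequence of squares connecting $e_i$ to $e_j$), and each must be parallel to $h$ across a square, so its label commutes with $v$.

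Given this, apply the commutation relations in $A$: slide $s_i$ rightward past each $s_{i+1},\ldots,s_{j-1}$ until it sits next to $s_j = s_i^{-1}$, then cancel. This produces a word of length $n-2$ representing the same group element, hence an edge path $[p,q]$ of length $n-2$, contradicting that $\gamma$ is geodesic.

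The main obstacle is the middle step: rigorously establishing that every intermediate edge's label commutes with $v$. This is the heart of the lemma; the remaining shortening is then pure RAAG bookkeeping via commutation relations. I expect this step to require either the parallelism-via-squares argument sketched above (using the cube-complex structure of the $h\times I$ collar) or a gate-projection argument onto the half-space bounded by $h$ containing $p$ and $q$, which would collapse $e_i, e_j$ and yield a strictly shorter path directly.
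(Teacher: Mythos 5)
Your overall shape (innermost pair of crossings, then shorten) matches the paper's, but the step you yourself flag as the heart of the lemma --- that every intermediate letter commutes with $v$ --- is not actually established, and the justification you sketch does not work as stated. The chain of squares realizing the parallelism between $e_i$ and $e_j$ lives inside the carrier ($h\times I$ neighborhood) of $h$; it has no a priori relation to the edges of $\gamma$ between $e_i$ and $e_j$, which could wander far from $h$. Saying ``the intermediate edges live in the collar neighborhood structure of $h$'' is precisely the conclusion you need, not something the cube-complex structure hands you for free. Note also that your minimality choice, as stated, only guarantees that no intermediate edge is dual to $h$ itself; it says nothing about other hyperplanes, so it does not by itself keep the subpath near $h$.

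The missing idea (and the paper's actual argument) is this: take the pair of crossings innermost among \emph{all} hyperplanes crossed twice, not just among crossings of $h$. Then no hyperplane $h'$ disjoint from $h$ can meet the subpath between the two crossings: both endpoints of that subpath lie on $h$, which sits entirely on one side of $h'$, so the subpath would have to cross $h'$ an even number of times, producing a strictly smaller nested pair and contradicting innermost-ness. Hence every hyperplane crossed by the subpath intersects $h$, and (by the specialness facts recorded before Lemma \ref{adjacent}: intersecting hyperplanes in $X$ have commuting types, and inter-type hyperplanes adjacent to a common vertex span a square) an induction along the subpath shows it stays adjacent to $h$, i.e.\ inside the carrier. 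Only at that point do you know each intermediate label commutes with $v$, so the word $v^{\epsilon} s_{i+1}\cdots s_{j-1} v^{-\epsilon}$ collapses to $s_{i+1}\cdots s_{j-1}$ and the path shortens by two, giving the contradiction. Your alternative suggestion of a gate/projection argument onto the halfspace would also close the gap, but it would need to be carried out, with the non-expansiveness of the combinatorial projection proved or cited, rather than merely named.
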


\begin{proof}
Suppose $[p,q]$ does cross some hyperplane twice.  Consider an innermost pair of crossings.  Let $h$ be the hyperplane they cross.  The path between the crossings does not meet any hyperplane $h'$ disjoint to $h$, otherwise it would have to recross $h'$ in order to return to $h$, violating our choice of an innermost pair of crossings.  Thus the path stays adjacent to $h$, in an $h \times I$ neighborhood.  This means $[p,q]$ can be shortened by removing two crossings of $h$, so it is not a geodesic segment.
\end{proof}

\begin{corollary}
An $X^{(1)}$ geodesic segment is any segment that crosses no hyperplane more than once.  Given two points, the $X$ geodesic segment and $X^{(1)}$ geodesic segments between them all cross the same set of hyperplanes.
\end{corollary}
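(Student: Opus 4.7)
The plan is to reduce everything to a parity observation about edge paths and hyperplane crossings. Recall that two vertices of $X$ lie on the same side of a hyperplane $h$ if and only if every edge path between them crosses $h$ an even number of times. So if we let $\mathcal{H}(p,q)$ denote the set of hyperplanes separating $p$ from $q$, then any edge path from $p$ to $q$ must cross each hyperplane in $\mathcal{H}(p,q)$ an odd number of times (in particular at least once), and any other hyperplane an even number of times. Consequently the length of any edge path from $p$ to $q$ is at least $|\mathcal{H}(p,q)|$.

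For the first sentence of the corollary, suppose an edge path $\gamma$ from $p$ to $q$ crosses no hyperplane more than once. By the parity observation $\gamma$ must cross every hyperplane of $\mathcal{H}(p,q)$, so it crosses each one exactly once, and it crosses no hyperplane outside $\mathcal{H}(p,q)$. Therefore its length is exactly $|\mathcal{H}(p,q)|$, which is the minimum possible, so $\gamma$ is an $X^{(1)}$ geodesic. Combined with Lemma \ref{once}, this shows that an edge path between $p$ and $q$ is an $X^{(1)}$ geodesic if and only if it crosses each hyperplane at most once, and in that case the hyperplanes it crosses are exactly $\mathcal{H}(p,q)$. In particular all $X^{(1)}$ geodesics from $p$ to $q$ cross the same set of hyperplanes.

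It remains to check that the $X$ geodesic segment $[p,q]_X$ also crosses exactly $\mathcal{H}(p,q)$. Here I would invoke the dichotomy stated just before Lemma \ref{once}: a geodesic in $X$ meets each hyperplane either transversely in a single point, or not at all, or is contained in it. Since $p$ and $q$ are vertices they lie on no hyperplane, so containment is impossible; hence $[p,q]_X$ meets each hyperplane in at most a single transverse point. If $h \in \mathcal{H}(p,q)$, then the continuous path $[p,q]_X$ must cross $h$ by connectedness of its image's complement in $X \setminus h$; if $h \notin \mathcal{H}(p,q)$ then $p$ and $q$ lie in the same component of $X \setminus h$, so $[p,q]_X$ cannot cross $h$ transversely. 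Thus $[p,q]_X$ crosses exactly the hyperplanes of $\mathcal{H}(p,q)$.

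The argument is essentially bookkeeping, and I don't anticipate a real obstacle: the two ingredients, parity of edge-path crossings and the transverse-or-disjoint dichotomy for $X$-geodesics, are both already in hand. The only place one must be a little careful is ruling out containment of the $X$ geodesic in a hyperplane, which is handled by the observation that vertices of the cube complex lie on no hyperplane.
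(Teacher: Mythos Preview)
Your argument is correct and is exactly the kind of routine filling-in that the paper leaves implicit (the corollary is stated without proof). You use precisely the two ingredients the paper sets up just before Lemma~\ref{once}: the parity observation for edge paths, and the contained/disjoint/single-transverse-point trichotomy for $X$-geodesics. The counting step---that an edge path crossing no hyperplane twice has length exactly $|\mathcal{H}(p,q)|$, hence is minimal---is the right way to get the converse direction to Lemma~\ref{once}, and your treatment of the CAT(0) geodesic is clean.
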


The following corollary is an application of the triangle inequality.

\begin{corollary}
\label{axis}
Given $w \in A$ not equal to $1$ and a vertex $p \in X$ such that $d(p, wp)$ is minimal, the set $\xi = \displaystyle \bigcup_{i \in \mathbb{Z}} w^i[p,wp]$ is an axis of $w$, that is a $w$-invariant bi-infinite $X^{(1)}$ geodesic.
\end{corollary}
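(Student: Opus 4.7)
The plan is to split the claim into two halves.  The $w$-invariance of $\xi$ is immediate from the construction, since $w \cdot \xi = \bigcup_{i \in \mathbb{Z}} w^{i+1}[p,wp] = \xi$.  For the bi-infinite $X^{(1)}$ geodesic statement, I would argue it suffices to show that every finite sub-path of $\xi$ is an $X^{(1)}$ geodesic, and by the corollary to Lemma \ref{once} this reduces to showing that no such sub-path crosses any hyperplane twice.

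Individually each segment $w^i[p,wp]$ is already an $X^{(1)}$ geodesic, so by Lemma \ref{once} it crosses each hyperplane at most once.  Thus a double-crossing of some hyperplane $h$ by $\xi$ must come from two distinct translates $w^i[p,wp]$ and $w^j[p,wp]$ with $i<j$.  After translating by $w^{-i}$ (which preserves $\xi$) I may take $i=0$, so $[p,wp]$ and $w^j[p,wp]$ both cross $h$, equivalently $[p,wp]$ crosses both $h$ and $w^{-j}h$.  I would fix $j$ to be the smallest such index.

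Now look at the sub-path of $\xi$ from $p$ to $w^{j+1}p$, which has length $(j+1)|w|$.  By minimality of $j$, no segment $w^k[p,wp]$ with $0<k<j$ crosses $h$, so $h$ is crossed exactly twice.  Because the $X^{(1)}$-distance between vertices of a CAT(0) cube complex equals the number of separating hyperplanes, and hyperplanes crossed an even number of times do not separate, this forces $d(p,w^{j+1}p)\le (j+1)|w|-2$.

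The last step is to convert this into a contradiction with the minimality of $d(p,wp)$ using the triangle inequality on translates: one wants to pair this upper bound with the matching lower bound $d(p,w^{j+1}p) \geq (j+1)|w|$.  The natural way is to observe that by subadditivity $d(p,w^n p) \geq n\tau(w)$ for the asymptotic translation length $\tau(w) = \lim_n d(p,w^n p)/n$, and to argue that $\tau(w) = |w|$ because $p$ achieves the vertex minimum.  This last equality — that in a CAT(0) cube complex (in particular $\widetilde{X_\Gamma}$) the vertex-minimum displacement agrees with the asymptotic translation length — is the main obstacle; it is what makes the short $\tau$-based argument work and requires invoking the combinatorial structure of hyperplanes in the Salvetti complex.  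With that identity in hand, $d(p,w^{j+1}p) \geq (j+1)|w|$ contradicts the bound above, so no hyperplane can be crossed twice and $\xi$ is the desired bi-infinite $X^{(1)}$ geodesic axis.
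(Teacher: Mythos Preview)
Your reduction is sound up to the point where you need the lower bound $d(p,w^{j+1}p)\ge (j+1)|w|$, but the way you propose to obtain it is essentially circular. The identity $\tau(w)=|w|$ (combinatorial translation length equals minimum vertex displacement) is \emph{equivalent} to the statement that the concatenation $\bigcup_i w^i[p,wp]$ is an $X^{(1)}$ geodesic: knowing $d(p,w^np)=n|w|$ for all $n$ is exactly what the corollary asserts, and it immediately gives $\tau(w)=|w|$ and conversely. So invoking that identity as a black box is assuming what you are trying to prove; the ``main obstacle'' you flag is the entire content of the result.

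The paper's one-line ``application of the triangle inequality'' hides a short direct argument that avoids $\tau$ altogether. Here is the idea you are missing. Suppose some hyperplane $h$ is crossed by both $[p,wp]$ and $[w^jp,w^{j+1}p]$ with $j\ge 1$ minimal. Choosing $h$ to be the innermost such hyperplane (as in the proof of Lemma~\ref{once}), the subpath between the two crossings stays adjacent to $h$, so in particular $wp$ is adjacent to $h$. Let $r$ be the vertex across $h$ from $wp$. Then $d(p,r)=|w|-1$, and the triangle inequality through $wp$ gives $d(r,wr)\le d(r,wp)+d(wp,wr)=1+(|w|-1)=|w|$, so $r$ is also a minimum-displacement vertex. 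Now count the hyperplanes separating $r$ from $w^jr$: one checks they are exactly those separating $wp$ from $w^{j+1}p$, with $h$ and $w^jh$ removed. Since the $A$-action preserves hyperplane orientation, $w^jh\ne h$ (else $p$ and $w^jp$ would lie on the same side of $h$), so $d(r,w^jr)=j|w|-2$. For $j=1$ this already contradicts minimality of $|w|$; for $j\ge 2$ it contradicts the inductive hypothesis that every minimum-displacement vertex satisfies $d(\,\cdot\,,w^j\,\cdot\,)=j|w|$. This is the triangle-inequality contradiction the paper has in mind, and it closes the gap without ever mentioning asymptotic translation length.
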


Note that $\xi$ depends both on the choice of $p$ and $[p, wp]$, so it is far from unique.  However, $\xi$ does remain within bounded distance of any (non-$X^{(1)}$) axis $L$ of $w$, since it is the $w$ orbit of a (bounded) segment.

The last lemmas rely on the specific construction of $X$, rather than general CAT(0) hyperplane facts.  By inspection, the hyperplanes of $X/A$ meet a single edge each.  Thus we can assign to each hyperplane in $X$ a \emph{type}, according to which hyperplane in $X/A$ lifts to it.  Moreover, since the hyperplanes of $X/A$ do not self-intersect, distinct hyperplanes of the same type in $X$ do not intersect.  Finally, note that the action of $A$ preserves type, as well as the orientation of the edges that cross each hyperplane.

\begin{lem}[Slope Lemma]
\label{slope}
Suppose $w \in A$ has axis $\xi$.  Suppose $p$ is a point on $\xi$, and $q$ is a point on a hyperplane $h$.  $h$ crosses $\xi$ beyond $wp$ (that is, on the ray $\xi = \displaystyle \bigcup_{i=1}^\infty w^i[p,wp]$) if and only if $h$ crosses $[wp, wq]$. Note figure \ref{slopefig}.
\end{lem}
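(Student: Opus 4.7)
The plan is to leverage the $w$-equivariance of $\xi$ together with the RAAG property that hyperplanes of the same type are pairwise disjoint. First I would reformulate the right-hand condition: since $w$ acts on $X$ by isometries, $[wp,wq]=w\cdot[p,q]$, so $h$ crosses $[wp,wq]$ if and only if $h$ separates $wp$ from $wq$ in $X$. The lemma thus becomes a statement about which side of $h$ contains $wq$.

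The key structural observation is that the translate $wh$ has the same hyperplane-type as $h$, and hence $h$ and $wh$ are disjoint (unless they coincide). Combined with $w\xi=\xi$ and the fact that an $X^{(1)}$ geodesic crosses each hyperplane at most once (Lemma \ref{once}), this forces $wh$ to lie in the component $A_h$ of $X\smallsetminus h$ into which $\xi$ proceeds after crossing $h$---or, if $h\cap\xi=\emptyset$, in the unique component of $X\smallsetminus h$ containing $\xi$. Granting this placement of $wh$, the forward direction is immediate: if $h$ crosses $\xi$ beyond $wp$, then $wp$ lies in the opposite component $B_h$ while $wq\in wh\subset A_h$, so $h$ separates $wp$ from $wq$. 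The reverse direction follows by a short case analysis: whenever $h$ crosses $\xi$ at or before $wp$, or is disjoint from $\xi$, both $wp$ and $wh$ sit in $A_h$, so $wq\in wh$ is on the same side of $h$ as $wp$ and $h$ does not separate them.

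The main obstacle will be verifying the placement claim in the disjoint case $h\cap\xi=\emptyset$, since here we cannot read off the side of $h$ containing $wh$ from the ordering of crossings along $\xi$. My approach would be by contradiction: if $wh$ lay on the opposite side of $h$ from $\xi$, then iterating $w^{-1}$ would produce an infinite sequence of pairwise-disjoint same-type hyperplanes $h, w^{-1}h, w^{-2}h,\ldots$, each strictly separating $\xi$ from its predecessor. A single finite-length edge path from a vertex of $\xi$ to a vertex adjacent to $h$ would then be forced to cross every one of these hyperplanes, contradicting Lemma \ref{once} applied to any such geodesic subsegment. This forces $wh$ onto the $\xi$-side of $h$ and closes the argument.
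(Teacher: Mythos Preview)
Your proof is correct and rests on the same two facts the paper uses---that hyperplanes of the same type are disjoint (or equal), and that a finite edge path crosses only finitely many hyperplanes---but you organize the argument differently. The paper proves the reverse implication directly by an inductive descent: from $h$ crossing $[wp,wq]$ it deduces that $w^{-1}h$ crosses $[p,q]$, then argues $w^{-1}h$ must also cross either $\xi$ or $[wp,wq]$, and iterates until either some $w^{-k}h$ hits $\xi$ (giving the conclusion) or infinitely many hyperplanes cross $[p,q]$ (absurd). You instead isolate a single structural claim---that $wh$ always lies in the $w$-forward halfspace $A_h$ determined by $h$---and derive both directions from it via a contrapositive case split. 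This buys you a cleaner conceptual picture, at the price of having to run the infinite-nesting contradiction separately in the disjoint case $h\cap\xi=\emptyset$; the paper folds essentially the same contradiction into its induction without ever splitting into cases.

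One small refinement: in your disjoint-case contradiction, take the finite segment to be $[p,q]$ itself (with $q\in h$) rather than a path to ``a vertex adjacent to $h$''. Since each $w^{-k}h$ separates $\xi$ from $h$, and $p\in\xi$ while $q\in h$, the geodesic $[p,q]$ is forced to cross every $w^{-k}h$, and the contradiction is immediate. Adjacency of a vertex to $h$ does not by itself pin down which side of $w^{-k}h$ that vertex lies on, so your phrasing there would need an extra half-edge extension to reach $h$ before the argument closes.
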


\begin{figure}[ht]
\centering
\def\svgwidth{90mm}
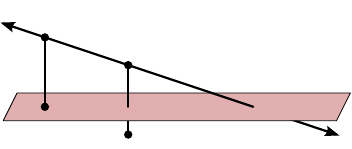
\caption{An illustration of the Slope Lemma}
\label{slopefig}
\end{figure}

\begin{proof}
If $h$ crosses $\xi$ then $wh \neq h$.  If the crossing is beyond $wp$ then $h$ must cross $[wp, wq]$ to avoid crossing $wh$.

On the other hand, if $h$ crosses $[wp, wq]$, we have that $w^{-1}h$ crosses $[p, q]$. Since $wh \neq h$, $w^{-1}h$ must also cross either $\xi$ or $[wp, wq]$.  Continuing by induction, we either get that some $w^{-k}h$ crosses $\xi$ (thus producing the desired conclusion) or that infinitely many hyperplanes cross $[p, q]$, an absurdity.
\end{proof}

If $h_1$ and $h_2$ are two hyperplanes in $X$ whose projections in $X/A$ intersect transversely we write $h_1 \inter h_2$.  If their projections are disjoint or identical we write $h_1 \dis h_2$.  No hyperplane of $X/A$ self intersects, so if $h_1 \dis h_2$, then $h_1$ and $h_2$ are either identical or disjoint.  However not all pairs $h_1 \inter h_2$ intersect.

If two hyperplanes $h_1 \inter h_2$ are adjacent to the same vertex, then they do in fact intersect in a square that meets that vertex.  Also, only one edge of each type and direction meets a given vertex.  These facts are immediate from our construction of $X$.  They are also, however, the remaining two conditions for the action of $A$ on $X$ to be  \emph{special}.  The following lemma is true for any group acting specially on a CAT(0) cube complex.

\begin{lem}
\label{adjacent}
Let $p$ be a vertex and $h$ a hyperplane.  The vertex $p$ is adjacent to $h$ if and only if every geodesic segment $[q, p]$ from any $q \in h$ crosses only hyperplanes $h_i \inter h$.
\end{lem}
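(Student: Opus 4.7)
The plan is to work in a fixed local model: after translation assume $h$ is the hyperplane through the edge from $1$ to $v_i$, where $v_i$ is the generator of the type of $h$, and set $L_i = \langle v_j : v_j \neq v_i,\ v_iv_j = v_jv_i \rangle$. Sliding $(1,v_i)$ through squares of the form $(u, uv_i, uv_iv_j, uv_j)$ with $v_j \in L_i$ identifies the vertices of $X$ adjacent to $h$ as exactly $L_i \cup L_iv_i$, and the definition of $\inter$ tells us that $h' \inter h$ iff the type of $h'$ is a generator lying in $L_i$.

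For the forward direction I would prove the following: if $p$ is adjacent to $h$ and $h'$ is a hyperplane with $h' \dis h$, $h' \neq h$, then $p$ and $h$ sit in the same open halfspace of $h'$. The hypothesis forces $h' \cap h = \emptyset$ in $X$, so $h$ is contained in one open halfspace $H'$ of $h'$. The edge $e$ from $p$ crossing $h$ meets only the hyperplane $h$, hence not $h'$; so $e$ lies entirely in one closed halfspace of $h'$, and since the midpoint of $e$ lies on $h \subset H'$, both endpoints (vertices, and therefore not on $h'$) lie in $H'$. In particular $p \in H'$. For any $q \in h$ we then have $p, q \in H'$, so $h'$ does not separate $q$ from $p$ and is not crossed by $[q, p]$. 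The hyperplane $h$ itself is also not crossed, since $q$ lies on the boundary between its halfspaces. Every hyperplane crossed by $[q, p]$ is therefore $\inter h$.

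For the backward direction I argue the contrapositive by specializing $q$. Suppose $p \notin L_i \cup L_iv_i$ and, by symmetry, that $p$ lies on the $1$-side of $h$; take $q$ to be the midpoint of the edge $(1, v_i)$. The sets of hyperplanes separating $1$ from $p$ and separating $v_i$ from $p$ differ exactly by $\{h\}$, so $|v_i^{-1}p| = |p|+1$, and every $X^{(1)}$ geodesic from $q$ to $p$ begins with the half-edge from $q$ to $1$ and then follows an edge path from $1$ to $p$ realizing a reduced word $w$ for $p$. Were every letter of $w$ in $L_i$, we would have $p \in L_i$, contradicting $p \notin L_i \cup L_iv_i$. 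So $w$ contains a letter equal to $v_i$ or to some $v_j$ not commuting with $v_i$, and the corresponding edge crosses a hyperplane whose type is identical to $v_i$ or non-commuting with $v_i$. Either case yields a hyperplane that is $\dis h$ and is crossed by $[q,p]$, contradicting the hypothesis.

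The subtlety that most needs care is making the forward direction uniform over arbitrary $q \in h$, since $q$ is not required to be a vertex of the cube complex structure on $h$. My halfspace argument handles this without ad hoc cases: the hyperplanes crossed by the CAT(0) geodesic $[q, p]$ are precisely those strictly separating $q$ from $p$, and the inclusion $h \subset H'$ automatically gives $q \in H'$ for every $q \in h$ no matter where inside $h$ it sits, so no combinatorial coordinate for $q$ ever enters the argument.
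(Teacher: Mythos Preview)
Your argument is correct, and the forward half is the same separation idea as the paper's, read directly rather than contrapositively: the paper picks a specific $h_i\dis h$ crossed by $[q,p]$ and notes that it separates $h$ from the tail of the path and hence from $p$, so $p$ cannot be adjacent; your halfspace phrasing is the same fact in the other logical direction, with the pleasant feature that it handles arbitrary $q\in h$ uniformly. The backward half is where you genuinely diverge. The paper proves it directly by sliding along the given geodesic: $p_1$ is adjacent by definition, and since each $h_i\inter h$, the edge $[p_i,p_{i+1}]$ together with the edge from $p_i$ across $h$ spans a square, so $p_{i+1}$ stays adjacent; by induction $p$ is adjacent. You instead pass through the identification of the carrier vertices as $L_i\cup L_iv_i$ and then run a reduced-word argument on the contrapositive. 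Both work. The paper's slide is shorter and, as remarked right after the lemma, uses only the special-action axioms, so it goes through for any special CAT(0) cube complex; your route is RAAG-specific but makes the carrier completely explicit. One small observation: for your contrapositive you actually only need the inclusion $L_i\cup L_iv_i\subseteq\{\text{adjacent vertices}\}$, not the full ``exactly'', and that inclusion is precisely the paper's sliding step in algebraic dress.
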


\begin{proof}
Let $p_i$ be the $i^{th}$ vertex in $[q,p]$, and $h_i$ be the hyperplane crossing $[p_i, p_{i+1}]$.  Suppose $h_i \inter h$ for all $i$.  We'll show $p$ is adjacent to $h$ by induction.  The first vertex, $p_1$, is adjacent by definition.  Now suppose $p_i$, is adjacent to $h$, via an edge we'll denote $e$. The edges $[p_i, p_{i+1}]$ and $e$ span a square, $h$ is a midcube of the square, and $p_{i+1}$ is still adjacent to $h$.

On the other hand if some $h_i \dis h$ then $h_i$ separates $h$ from $[p_{i+1}, p]$, since $[q,p]$ crosses no hyperplane twice.  Thus any path from $h$ to $p$ must cross $h_i$, and $p$ is not adjacent to $h$.
\end{proof}

Any element of $A$ that preserves $h$ also preserves the set of adjacent vertices on each side.  Given a point $p$ adjacent to $h$, the stabilizer subgroup of $h$ in $A$ sends $p$ along the edges that cross $h_i \inter h$.  In a RAAG, these translations have a particular form.  They correspond to the generators that commute with the generator whose edge crosses $h$.  Thus the stabilizer subgroup is conjugate to a subgroup generated by vertices of $\Gamma$.  This characterization allows us to conclude that if $w^ih=h$ then $wh=h$. which permits a useful variant of the Slope Lemma (\ref{slope}).

\begin{lem}[Parallel Axis Lemma]
\label{parallel}
Let $w \in A$ have axis $\xi$ as above, and $p \in \xi$.  Suppose that some hyperplane $h$ does not meet $\xi$ and $r$ is some point in $X$ such that $[p,r]$ meets $h$.  If $h$ intersects $w^n[p, r]$ for any $n \neq 0$, then $wh=h$.
\end{lem}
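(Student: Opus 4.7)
The plan is to induct on the length of $[p, r]$ --- equivalently, on the number of hyperplanes it crosses --- reducing to a strictly shorter sub-segment. Let $q := [p, r] \cap h$, which is unique and, since $\xi \ni p$ and $h \cap \xi = \emptyset$, an edge midpoint strictly interior to $[p, r]$. Because $\xi$ is $w^n$-invariant and bi-infinite, it is also an axis of $w^n$, so the Slope Lemma (\ref{slope}) applies verbatim with $w^n$ in place of $w$: the hypothesis $h \cap \xi = \emptyset$ rules out $h$ crossing $\xi$ beyond $w^n p$, hence $h$ does not cross the subsegment $[w^n p, w^n q]$ of $w^n[p, r]$. The intersection $q_n := w^n[p, r] \cap h$ must therefore lie in the complementary subsegment $[w^n q, w^n r]$, so the point $q' := w^{-n} q_n$ lies in $[q, r] \subseteq [p, r]$ and, by construction, in $w^{-n} h$.

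Next I split into two cases depending on whether $q' = q$. Each edge midpoint of a CAT(0) cube complex lies on a unique hyperplane, so if $q' = q$ then $q \in h \cap w^{-n}h$ forces $h = w^{-n} h$, i.e., $w^n h = h$. The remark immediately preceding this Lemma --- which identifies the stabilizer of $h$ with a subgroup conjugate to one generated by vertices of $\Gamma$, yielding the implication $w^i h = h \Rightarrow wh = h$ --- then gives $wh = h$. Otherwise $q' \neq q$, so $q'$ is an edge midpoint strictly past $q$ on $[p, r]$, and $[p, q']$ is a proper sub-segment.

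To close the induction, I apply the inductive hypothesis to the shorter segment $[p, q']$ with the same $h$, $w$, and $n$: $[p, q']$ meets $h$ at the interior point $q$, the translate $w^n[p, q']$ meets $h$ at its endpoint $w^n q' = q_n$, and the other data ($p \in \xi$, $h \cap \xi = \emptyset$) are unchanged. The inductive hypothesis then gives $wh = h$. The base case is a single-edge segment $[p, r]$: it crosses exactly one hyperplane (which must be $h$), and $[q, r]$ contains no edge midpoint other than $q$, so necessarily $q' = q$ and Case 1 applies. The only subtle point to verify with any care is that the Lemma may be reapplied with the new ``$r$'' equal to $q'$, which is not a vertex but merely a point of $X$ lying on the hyperplane $w^{-n} h$; this is permitted since the Lemma only requires $r$ to be a point of $X$ and only asks that $w^n[p, r]$ \emph{meet} (not transversely cross) $h$.
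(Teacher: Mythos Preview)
Your induction does not terminate. The first reduction is fine: if $r$ is a vertex you get $q' \in (q, r)$, and $[p, q']$ is genuinely shorter. But to invoke the inductive hypothesis you must already know the lemma for $[p, q']$, and running your argument on \emph{that} segment stalls. Indeed, $w^n[p, q']$ is contained in $w^n[p, r]$, which meets $h$ only at $q_n = w^n q'$; this point is the \emph{endpoint} of $w^n[p, q']$, so the new reduction gives $q'' := w^{-n}(w^n q') = q'$, and the ``shorter'' segment is $[p, q''] = [p, q']$ again. Once the far endpoint sits on $w^{-n}h$, your reduction becomes the identity and the induction is circular. (Equivalently: no choice of integer- or real-valued ``length'' decreases on the second pass.)

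What you are missing is a second application of the Slope Lemma, in the direction of $w^{-n}$. Your single application yields $q_n \in [w^n q, w^n r]$, i.e.\ $d(w^n p, q_n) \geq d(p, q)$. The paper also applies the Slope Lemma with axis point $w^n p \in \xi$ and hyperplane point $q_n \in h$, translating by $w^{-n}$: if $d(w^n p, q_n) > d(p, q)$ then $q$ lies strictly between $p$ and $w^{-n} q_n$ on $[p, r]$, so $h$ crosses $[p, w^{-n} q_n]$, and the Slope Lemma forces $h$ to cross $\xi$ beyond $p$ --- contradicting $h \cap \xi = \emptyset$. Hence $d(w^n p, q_n) = d(p, q)$, so $q_n = w^n q$ and $w^n h = h$ at once. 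No induction is needed; the two opposite applications of the Slope Lemma pin down $q_n$ exactly.
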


\begin{figure}[ht]
\centering
\def\svgwidth{90mm}
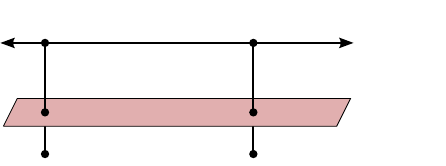
\caption{An illustration of the Parallel Axis Lemma}
\label{parallelfig}
\end{figure}

\begin{proof} 
Let $q = h\cap [p, r]$.  Suppose that for some $n \neq 0$, $h$ meets $w^n[p, r]$ at $q'$.  If $d(p, q) < d(w^ip, q')$, then the Slope Lemma implies that $h$ meets $\xi$ beyond $p$ (on the ray not containing $w^np$. Similarly, if  $d(p, q) > d(w^np, q')$, then $h$ meets $\xi$ beyond $w^np$.  However, $h$ doesn't meet $\xi$ at all.  We conclude that $d(p, q) = d(w^np, q')$ and $w^nq=q'$ (see Figure \ref{parallelfig}). Thus $w^nh=h$.  As noted above, if $w^n$ is in the stabilizer of $h$ then so is $w$.
\end{proof}

\subsection{A standard form for $X^{(1)}$ geodesics}

The Salvetti complex $X/A$ has a single vertex. If we choose a base vertex of $X$, which we'll call $1$, then every other vertex has the form $w\cdot 1$ for some $w \in A$.  For brevity, we'll let $w$ denote the vertex $w \cdot 1$ as well as the group element.  

Given an element $w \in A$ we'll choose and label a few useful objects, also illustrated in Figure \ref{standardfig}:

\begin{enumerate}
\item A vertex $a_w$ on an axis of $w$ that is minimal distance from $1$ among all vertices that lie on axes of $w$
\item A geodesic $\w$ from $a$ to $wa$
\item A geodesic $\a_w$ from $1$ to $a$.
\item The concatenation $\displaystyle \overleftrightarrow{\w} = \bigcup_{i \in \mathbb{Z}} w^i \w$, which if $w \neq 1$ is an axis of $w$. 
\end{enumerate}

\begin{figure}[ht]
\centering
\def\svgwidth{90mm}
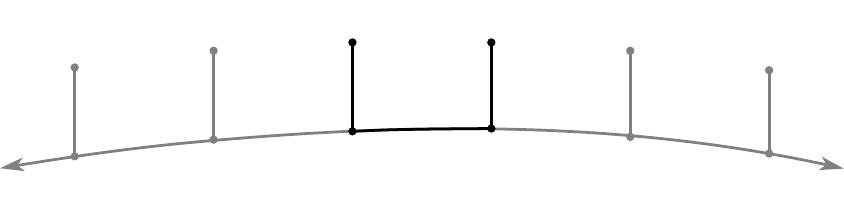
\caption[The geodesics $\a$, $\w$ and $w\a$]{The geodesics $\a$, $\w$ and $w\a$. 
The translates of $\w$ form an axis $\overleftrightarrow{\w}$.}
\label{standardfig}
\end{figure}

Notice that if $1$ lies on an axis of $w$, then $a_w=1$.  Also, our geodesics are in $X^{(1)}$, so these choices are not unique.  Henceforth, we'll omit the subscripts when there is no ambiguity.

\begin{lem}[Standard Form]
\label{standard}
Given an element $w$ in $A$ and a choice of $\w$, $a$ and $\a$ as above, 

\begin{enumerate}
\item No hyperplane meets $\displaystyle \W=\overleftrightarrow{\w} \cup \bigcup_{i \in \mathbb{Z}} w^i \a$ more than once.
\item The concatenation of $\a$, $\w$ and $w \a$ is a geodesic from $1$ to $w$.
\end{enumerate}
\end{lem}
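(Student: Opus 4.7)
The plan is to prove Part~(1); Part~(2) then follows immediately because the path $\a\cdot\w\cdot w\a$ (from $1$ to $a$ to $wa$ to $w$) lies entirely inside $\W$, so by the corollary to Lemma~\ref{once} it is forced to be an $X^{(1)}$-geodesic.

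I argue Part~(1) by contradiction. Each individual piece of $\W$ is itself an $X^{(1)}$-geodesic --- the axis $\overleftrightarrow{\w}$ by Corollary~\ref{axis}, and each spoke $w^i\a$ as a $w^i$-translate of the $X^{(1)}$-geodesic $\a$ --- so by Lemma~\ref{once} a hyperplane $h$ meets each piece at most once. Hence if $h$ met $\W$ twice, the two crossings would lie on two different pieces, leaving two cases: \textbf{(A)} $h$ crosses two different spokes $w^i\a$ and $w^j\a$, or \textbf{(B)} $h$ crosses the axis and a spoke. In Case~(A), if $h$ does not also meet the axis, the Parallel Axis Lemma applied with $p=w^ia\in\xi$, $r=w^i$, geodesic $w^i\a$, and shift $n=j-i\neq 0$ forces $wh=h$; if $h$ does meet the axis we reduce to Case~(B). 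In Case~(B), after applying the isometry $w^{-i}$ (which replaces $h$ by $w^{-i}h$ but preserves $\W$) we may assume $h$ crosses $\overleftrightarrow{\w}$ and $\a$; invoking the Slope Lemma with $p=a$, once using $w$ and once using $w^{-1}$, shows that any axis crossing strictly outside the segment $[w^{-1}a,wa]$ would force $h$ to meet an additional spoke, sending us back into Case~(A).

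In every case the goal is to produce a vertex on some axis of $w$ strictly closer to $1$ than $a$, contradicting the minimality defining $a$. In Case~(B), when $h$ crosses $\a$ and $\w$, I replace the portion of $\a\cdot\w$ between the two crossings by its parallel translate on the opposite side of $h$ (as in the proof of Lemma~\ref{once}); when the $\w$-crossing happens on the edge adjacent to $a$, the resulting path reaches an axis vertex of $\xi$ from $1$ using fewer than $|\a|$ edges, immediately contradicting minimality. For Case~(A), from $wh=h$ I would use the description of $\stab(h)$ as conjugate to the subgroup of $A$ generated by the link of the generator dual to $h$ (recorded just before the Parallel Axis Lemma), together with the fact noted in the Hyperplanes subsection that a hyperbolic isometry of $X$ preserving $h$ has an axis inside $h$, to conclude that the vertex of $\a$ adjacent to $h$ on the $1$-side lies on an $X^{(1)}$-axis of $w$, and this vertex is strictly closer to $1$ than $a$. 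The step I expect to be hardest is this final bridge in Case~(A) (and the analogous refined shortcut in the remaining sub-cases of Case~(B)): CAT(0) axes inside a hyperplane need not pass through $X^{(1)}$-vertices, so producing an ambient $X^{(1)}$-axis of $w$ through a vertex adjacent to $h$ really uses the special structure of the action of $A$ on $X$, which ensures that edges dual to $h$ share a single generator label and that $\stab(h)$ acts on the adjacent vertex sets in a controlled way.
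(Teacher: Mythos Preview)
Your overall architecture matches the paper --- prove (1) by contradiction via the Slope and Parallel Axis Lemmas, then deduce (2) --- but there is a genuine gap. Your reductions (A)$\to$(B) (``if $h$ also meets the axis, reduce to (B)'') and (B)$\to$(A) (``an axis crossing outside $[w^{-1}a,wa]$ forces another spoke crossing, sending us back to (A)'') can ping-pong indefinitely; nothing you wrote forces termination in one of the two desired terminal configurations. Relatedly, your Case~(B) endgame only treats the sub-case where the $\w$-crossing is on the edge incident to $a$; the ``parallel translate across $h$'' manoeuvre from Lemma~\ref{once} does not transfer, because the portion of $\a\cdot\w$ between the two crossings need not remain adjacent to $h$.

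The missing idea, which dissolves both problems at once, is a preliminary reduction: after translating so that one crossing of $h$ lies on $\a$, you may further assume $h$ is \emph{adjacent to $a$}. Indeed, if some hyperplane $h'\parallel h$ separates $h$ from $a$ along $\a$, then $h'$ is disjoint from $h$, so the second crossing point $q\in h\cap\W$ also lies on the far side of $h'$ from $a$; the path in $\W$ from $a$ to $q$ must therefore cross $h'$, giving $h'$ two crossings with $\W$, and we replace $h$ by $h'$. Once $h$ is adjacent to $a$, Lemma~\ref{adjacent} shows the geodesic $[wa,wp]$ meets only hyperplanes $\perp wh$, so $h$ (being $\parallel wh$) does not meet it, and the Slope Lemma then cleanly forbids axis crossings beyond $wa$ (and symmetrically beyond $w^{-1}a$): the dichotomy ``$h$ crosses $\w$'' versus ``$wh=h$'' is now immediate, with no circularity. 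The endgame then becomes a one-line hyperplane count: letting $a'$ be the neighbour of $a$ across $h$, one has $\H_{[1,a']}=\H_{\a}\setminus\{h\}$ and $\H_{[a',wa']}=\{h\}\,\Delta\,\H_{\w}\,\Delta\,\{wh\}$, which has size $|\H_{\w}|$ in either case, so $a'$ lies on an axis of $w$ strictly closer to $1$ than $a$. This replaces both of your proposed endgames; in particular the stabiliser/axis-in-hyperplane machinery you sketch for Case~(A) --- which you rightly flag as the hardest and most uncertain step --- is not needed at all.
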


\begin{notation}
Given a set of hyperplanes $\H$ and an edge path $\gamma$, let $\H_\gamma$ denote the set of hyperplanes in $\H$ that meet $\gamma$ an odd number of times.  This is equivalent to saying $\H_\gamma$ is the set of $h \in \H$ that separate the endpoints of $\gamma$, if it has them.  Note $\H_{\gamma'} = \H_\gamma$ for any $\gamma'$ with the same endpoints.  If $\alpha, \beta, \gamma$ is a triangle, then $\H_\gamma = \H_\alpha \Delta \H_\beta$ (the symmetric difference).
\end{notation}

\begin{proof}[Proof of Lemma \ref{standard}]

(1) Suppose some $h$ meets $\W$ more than once.  We can assume (translating by $w$) that one of those intersections lies on $\a$, since no hyperplane meets an axis twice.  We can further assume that $h$ is adjacent to $a$.  If it is separated from $a$ by some other hyperplane, then that hyperplane also intersects $\W$ more than once.  

Let $p= h \cap \a$ and let $q$ be a different point of $h\cap \W$.  By Lemma \ref{adjacent}, $[wa, wp]$ meets only hyperplanes of intersecting type with $wh$.  Thus $h$ does not meet $[wa, wp]$.  Therefore, by the Slope Lemma (\ref{slope}), the point $q$ cannot lie on $\overleftrightarrow{\w}$ beyond $wa$. Similarly $q$ cannot lie on $\overleftrightarrow{\w}$ beyond $w^{-1}a$.  This leaves the following possibilities for $q$:

\begin{itemize}
\item $h$ meets $\overleftrightarrow{\w}$ between $w^{-1}a$ and $wa$.
\item $h$ does not meet $\overleftrightarrow{\w}$ at all.  Thus $q \in w^n\a$ for some $n \neq 0$ and by the Parallel Axis Lemma (\ref{parallel}), $wh=h$.
\end{itemize}

Thus, exploiting symmetry, we can take $h$ to cross $\w$ or we can stipulate $h=wh$ (but not both, since no hyperplane meets $\overleftrightarrow{\w}$ twice).  In either case, we reach the same contradiction as follows:

Let $a'$ be the vertex adjacent to $a$ that shares the edge through $h$.   Let $\H$ be the set of all hyperplanes of $X$.  Then we have $\H_{[1, a']}=\H_\a - \{h\}$.  We also have $\H_{[a', wa']}=\{h\} \Delta \H_{\w} \Delta \{wh\}$, which in either case means that $d(a', wa')=d(a, wa)$. We conclude $a'$ lies on an axis and is closer to $1$ than $a$, violating our construction of $a$.  

(2) Since the concatenation of $\a$, $\w$ and $w \a$ crosses every hyperplane between $1$ and $w$ once, but none of them twice, it must be a geodesic segment.
\end{proof}

\begin{rem}
The use of Lemma \ref{adjacent} is a convenience here.  One can prove this lemma with only the assumption that $X$ is CAT(0), and hyperplanes of $X/A$ don't self-intersect. 
\end{rem}

\begin{lem}[Separating Lemma]
\label{separating}
Let $w \neq 1$ be an element of $A$ and suppose that a hyperplane $h$ is disjoint from one (and hence every) $X^{(1)}$ axis of $w$. Suppose that $wh \neq h$.  Then there is a hyperplane $h_1 \dis h$ that separates $h$ from $wh$. $h_1$ does not intersect $w^i h$ for any $i\in \mathbb{Z}$.  
\end{lem}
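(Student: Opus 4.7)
The plan is to locate $h_1$ as a $\dis h$ crossing of an $X^{(1)}$ geodesic $\gamma = [q_0, wq_0]$, where $q_0$ is a vertex adjacent to $h$ on the axis-side of $h$ chosen to minimize the distance $d(p, q_0)$ for a fixed $p \in \xi$. Set $q_1 = wq_0$; it is adjacent to $wh$ on its axis-side.

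The main step is to show that $\gamma$ must cross some hyperplane of type $\dis h$. Suppose otherwise, so that every crossing of $\gamma$ is of type $\inter h$. Because $w$ preserves hyperplane types, the relation $\inter h$ coincides with $\inter w^n h$ for every $n$, and hence each translate $w^n \gamma$ also crosses only $\inter h$ hyperplanes. The inductive step in the proof of Lemma \ref{adjacent} propagates adjacency to $h$ across such edges: starting from $q_0$ adjacent to $h$ and following $\gamma, w\gamma, w^2\gamma, \ldots$, every $q_n = w^n q_0$ is adjacent to $h$. Translating by $w^{-n}$ then yields $q_0$ adjacent to $w^{-n}h$ for every $n \geq 0$. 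Since the action is special, $w^k h = h$ would force $wh = h$; hence the hyperplanes $\{w^{-n}h\}_{n \geq 0}$ are all distinct, so $q_0$ meets infinitely many hyperplanes, contradicting the finite valence of $X$.

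Given a $\dis h$ crossing $h_1$ of $\gamma$ with $h_1 \neq h, wh$, the edge from $q_0$ that crosses $h$ has the same type as $h$ and therefore crosses no other hyperplane, in particular not $h_1$. Both its endpoints and its midpoint on $h$ then lie on $q_0$'s side of $h_1$; by connectedness, the entire hyperplane $h$ lies on that side, and symmetrically $wh$ lies on $q_1$'s side. Since $\gamma$ crosses $h_1$ these sides differ, so $h_1$ separates $h$ from $wh$. The orbit $\{w^i h\}_{i \in \mathbb{Z}}$ is a pairwise disjoint parallel family of same-type hyperplanes on which $w$ acts by a shift of index, totally ordering the family with $wh$ the immediate successor of $h$; no $w^j h$ lies strictly between, so $h_1 \notin \{w^i h\}$. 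Combined with $h_1 \dis w^i h$ (all sharing type with $h$), this yields $h_1 \cap w^i h = \emptyset$ for every $i$.

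\textbf{Main obstacle.} The delicate points glossed above are the implicit claims that $\gamma$ crosses neither $h$ nor $wh$ (so that $h_1 \neq h, wh$ in the preceding paragraph) and that the orbit $\{w^i h\}$ is totally ordered with $wh$ immediately succeeding $h$. Both can be justified from the minimal choice of $q_0$ and from the Parallel Axis Lemma, which controls the relative positions of $q_0$, $wq_0$, and the orbit members.
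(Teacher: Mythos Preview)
Your proposal is essentially correct and follows the same overall strategy as the paper --- locate $h_1$ as a $\dis h$ crossing of an edge geodesic $[q_0, wq_0]$ with $q_0$ adjacent to $h$ --- but makes two different tactical choices.

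\textbf{The contradiction step.} The paper is more direct: if $[q, wq]$ crosses only $\inter h$ hyperplanes then $wq$ is adjacent to both $h$ and to $wh$; since the $A$-action preserves orientation and only one edge of each type and direction meets $wq$, the edges from $wq$ across $h$ and across $wh$ coincide, forcing $h = wh$. Your iterated-adjacency argument (propagate adjacency to $h$ along $\gamma, w\gamma, w^2\gamma, \dots$, then translate back to get $q_0$ adjacent to every $w^{-n}h$, contradicting finite valence) is a valid alternative that avoids the orientation observation, at the cost of one more step.

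\textbf{Controlling $w^ih \cap \gamma$.} The paper's basepoint is chosen minimally over \emph{all} axes of $w$, which makes the Standard Form Lemma applicable and yields $[q,wq]\cap w^ih=\emptyset$ for every $i$ in one stroke; this is what gives $h_1\neq h, wh$ and $h_1\notin\{w^ih\}$ simultaneously. Your minimal choice of $q_0$ relative to a fixed $p\in\xi$ is never actually used, but the Parallel Axis Lemma you invoke does the job: apply it with $r$ the vertex across $h$ from $q_0$, so that $[p,r]$ meets $h$; the contrapositive shows $r$ (and hence $q_0$, via the single edge $[q_0,r]$) lies on the axis side of every $w^mh$, and by equivariance so does $wq_0$. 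Thus $\gamma$ crosses no $w^ih$ whatsoever, which fills both gaps you flag. The ``total order with $wh$ the immediate successor of $h$'' framing is unnecessary and not obviously true as stated (a family of pairwise disjoint hyperplanes need not be totally ordered); the simpler fact that $\gamma$ crosses no $w^ih$ already gives $h_1\notin\{w^ih\}$ directly, since $h_1$ does cross $\gamma$.
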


\begin{proof}
Let $[p, a]$ be an edge path from $h$ to an axis of $w$, minimal in length among all such paths for all axes of $w$.  $p$ is not a vertex, so we'll name the vertices of the edge it lies on.  Let $q$ be the one that lies in $[p,a]$.  Set the other vertex as the basepoint $1$.  Then $[1, w\cdot 1]$ decomposes into a geodesics $\a$, $\w$ and $w\a$, with $\a=[1, a] \supset [q, a]$.   By the Standard Form Lemma (\ref{standard}),
\begin{align*}
h\cap [1, w] &= \{p\} \\
wh \cap [1, w] &= \{wp\} \\
w^ih \cap [1, w] &=\emptyset \textrm{ for } i \neq 0, 1.
\end{align*} 
We conclude that $[q, wq]$ meets no $w^ih$.

Suppose that $[q, wq]$ meets no hyperplanes $h_1 \dis h$.  Then by Lemma \ref{adjacent}, $wq$ is adjacent to both $h$ and $wh$.  But $w$ preserves orientation, and only one edge of each type and direction meets the vertex $wq$.  This implies that $wh=h$, which violates our hypothesis.  We conclude that $[q,wq]$ does cross a hyperplane $h_1 \dis h$, which must therefore separate $h$ from $wh$.  For all $i \notin \{0, 1\}$, the hyperplane $w^ih \dis h_1$.  Notice though, that $h_1$ meets $[q, wq]$ and $w^ih$ doesn't.  Thus $w^ih$ can't be equal to $h_1$, so they are disjoint. 
\end{proof}

Repeated application of this lemma gives a corollary.

\begin{corollary}
\label{separating2}
As in the lemma let $w \neq 1$ be an element of $A$ and suppose that a hyperplane $h_0$ is disjoint from one (and hence every) $X^{(1)}$ axis of $w$. Suppose that $wh_0 \neq h_0$.  Then there is a hyperplane $h_N$ meeting $\overleftrightarrow{\w}$ that separates $h_0$ from $wh_0$, and does not intersect $w^i h_0$ for any $i\in \mathbb{Z}$.  
\end{corollary}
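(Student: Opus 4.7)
The strategy is to iterate Lemma \ref{separating}, producing a sequence $h_0, h_1, h_2, \ldots$ where $h_{i+1}$ is obtained by applying the Separating Lemma to $h_i$. The iteration halts as soon as some $h_i$ meets $\overleftrightarrow{\w}$, and that $h_i$ will be the desired $h_N$.

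To keep the iteration legal, I need $h_i \cap \overleftrightarrow{\w} = \emptyset$ (the non-terminating case) together with $w h_i \neq h_i$ at every step. The second condition is automatic: if $w$ stabilized $h_i$ setwise, then because $w$ acts hyperbolically on $X$, it would possess an axis contained in $h_i$ (by the remark immediately following Lemma \ref{adjacent}). This would contradict $h_i \cap \overleftrightarrow{\w} = \emptyset$, since the earlier corollary guarantees that any axis of $w$ meets $h_i$ exactly when every axis does.

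For termination, I would track the quantity $d_i := d(h_i, \overleftrightarrow{\w})$. Inspecting the proof of Lemma \ref{separating}, the selected $h_{i+1}$ crosses the geodesic segment $[q_i, wq_i]$, where $q_i$ is the vertex of $X$ adjacent to $h_i$ along a shortest edge path from $h_i$ to $\overleftrightarrow{\w}$. Because $q_i$ sits within distance $d_i - \tfrac{1}{2}$ of the axis, the selection of $h_{i+1}$ can be arranged so that it crosses the edge of $[q_i, wq_i]$ pointing toward $\overleftrightarrow{\w}$, yielding $d_{i+1} \leq d_i - 1$. The half-integer sequence $\{d_i\}$ is bounded below by $0$, so it must hit $0$ after finitely many steps; at that point $h_N$ meets $\overleftrightarrow{\w}$.

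The main obstacle, I expect, is that each application of the Separating Lemma only guarantees that $h_{i+1}$ separates $h_i$ from $wh_i$ and is disjoint from $w^j h_i$, rather than the analogous statements relative to $h_0$. To bridge this gap, I would strengthen the induction: at every step, require that $h_{i+1}$ separate $h_0$ from $wh_0$ and be disjoint from every $w^j h_0$. The inductive step uses that $h_{i+1}$ lies on the axis-side of $h_i$ (from the descent above), while inductively every $w^j h_0$ lies on the opposite side of $h_i$; the Parallel Axis Lemma excludes accidental coincidences between $h_{i+1}$ and the $w$-orbit of $h_0$. Carrying this bookkeeping through is expected to require the machinery already developed but no new geometric input.
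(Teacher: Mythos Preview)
Your overall strategy---iterate the Separating Lemma and track distance to the axis for termination---matches the paper's. But two of your supporting claims fail as written.

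First, your argument that $wh_i\neq h_i$ is incorrect. You assert that if $w$ stabilized $h_i$ it would have an axis inside $h_i$, and that this contradicts $h_i\cap\overleftrightarrow{\w}=\emptyset$ via the corollary to Lemma~\ref{transverse}. But that corollary concerns \emph{transverse} intersection; an axis lying inside $h_i$ does not transversely cross $h_i$, so the corollary gives no information about $\overleftrightarrow{\w}$. Indeed the situation you aim to rule out does occur: in $\mathbb{Z}^2=\langle a,b\mid ab=ba\rangle$ with $w=a$, every $b$-type hyperplane is $w$-invariant yet disjoint from the $X^{(1)}$ axis $\overleftrightarrow{\w}$. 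The paper instead shows that in the non-terminating case $h_{n+1}$ crosses exactly one of $\a$ or $w\a$, which immediately gives $wh_{n+1}\neq h_{n+1}$.

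Second, your strengthened induction is misformulated. You propose carrying the hypothesis that ``every $w^jh_0$ lies on the opposite side of $h_i$'' (from the axis). But already at $i=1$ this is false: $h_1$ separates $h_0$ from $wh_0$, so these two translates lie on \emph{different} sides of $h_1$. What actually propagates is a nesting of half-spaces: whenever $h_{n+1}$ does not meet $\overleftrightarrow{\w}$, it crosses $\a$ or $w\a$, so some translate $w^{\delta_n}h_{n+1}$ separates $h_n$ from the axis, giving $\text{non-axis-side}(w^ih_n)\subset\text{non-axis-side}(w^{i+\delta_n}h_{n+1})$. Chaining these inclusions shows each $w^ih_0$ lies in the non-axis-side of some $w^kh_{N-1}$; since $h_N$ meets the axis and is disjoint from every $w^kh_{N-1}$, it sits in the axis-side and hence misses every $w^ih_0$. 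This is what the paper's terse line ``each $w^ih_0$ is separated from $h_N$ by a sequence of hyperplanes $w^{i_n}h_n$'' encodes. The Parallel Axis Lemma plays no role here.
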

\begin{proof}
We'll argue by induction.  Given an $h_n$ that satisfies the hypotheses of Lemma \ref{separating} we produce $h_{n+1}$.  If $h_{n+1}$ crosses $\overleftrightarrow{\w}$, then we are done.  If not, then it crosses $\a$ or $w\a$, but not both.  This means that $h_{n+1}$ separates either $h_n$ or $wh_n$ from $\overleftrightarrow{\w}$.  Furthermore, $wh_n \neq h_n$, and we can apply the Lemma again.  Since $\a$ crosses finitely many hyperplanes, repeated application of the lemma will eventually produce some $h_N$ that crosses $\overleftrightarrow{\w}$.  Each hyperplane $w^ih_0$ is separated from $h_N$ by a sequence of hyperplanes $w^{i_n}h_n$.
\end{proof}

\section{Two-generator subgroups}


\subsection{Essential hyperplanes}


A two-generator subgroup of a RAAG $A$ is the image of a homomorphism $\phi:F\to A$, where $F$ is a free group on two generators.  $A$ acts on $X$, the universal cover of its Salvetti complex, so $F$ acts on $X$ too.  We'll produce the following tree $T$ containing $F$ and produce a map $\phi: T \to X$ which extends a standard orbit map on $F$.

First we assume that $F=\langle u,v \rangle$ and we choose a basepoint $1$ on some axis of $u$.  We then choose $\u$, $a_v$, $\a_v$, $\v$ as defined in the previous section (note that $a_u=1$, so we don't bother with $\a_u$), and immediately drop the subscripts. 

We define $T$ to be a trivalent tree with vertices labeled $w$ and $wa$ for all $w\in F$.  The edges of $T$ are as follows:

\begin{enumerate}
\item $w$ is adjacent to $wu$ via edge $w\u$
\item $w$ is adjacent to $wa$ via edge $w\a$
\item $wa$ is adjacent to $wva$ via edge $w\v$
\end{enumerate}

\begin{figure}[ht]
\centering
\def\svgwidth{90mm}
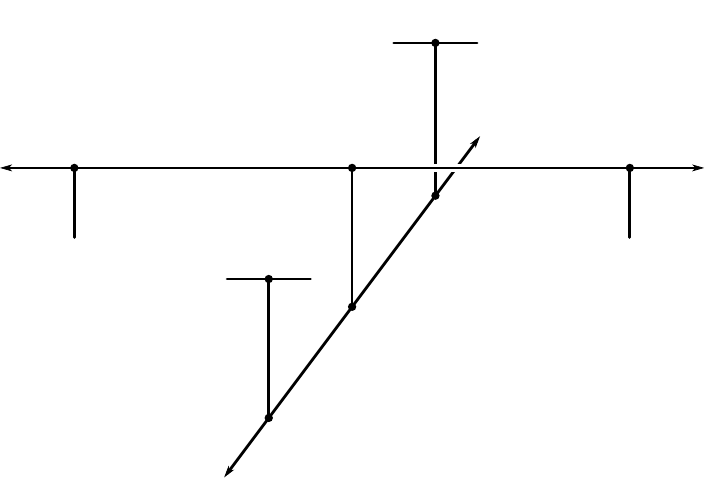
\caption{A diagram of $T$ near the vertex $1$}
\end{figure}

We extend the map $\phi$ to $T$ by mapping each vertex to its eponymous vertex in $X$ and each edge to its eponymous geodesic in some equivariant way (say, constant speed).  This has the effect that for some $q \in T$ with  $\phi(q) \in h$, we have $\phi(wq) \in h$ if and only if $wh=h$.

\begin{defn}
We say a hyperplane $h$ is \emph{essential} if $\phi^{-1}(h)$ is a single point.
\end{defn}

The advantage of essential hyperplanes to a metric argument should be clear.  A reduced word $w \in F$ can be taken as a geodesic in $T$.  If it crosses $\phi^{-1}(h)$ once, and $h$ is essential, then it will never cross $\phi^{-1}(h)$ again.  Thus our strategy is to produce sufficiently many of these to justify the main theorem.  Our tool for proving that a hyperplane is essential is the following definition and lemma.

\begin{defn}
Suppose $p \in T$.  We denote $T_p$ to be the closure of the connected component of $T-\{wp ~|~ w \in F - \{1\} \}$ containing $p$.  See Figure \ref{tpfig}.
\end{defn}

\begin{figure}[ht]
\subcaptionbox{$p \in \a$}
[.4\linewidth]
{\def\svgwidth{.4\linewidth} 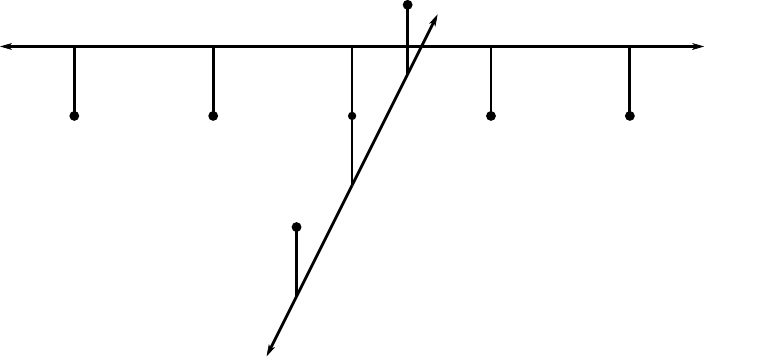}%
\subcaptionbox{$p \in \v$}
[.4\linewidth]
{\def\svgwidth{.4\linewidth} 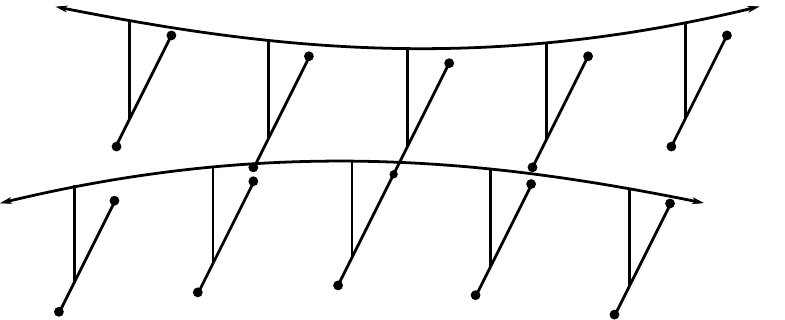}
\caption{Diagrams of $T_p$ for two possible $p$.}
\label{tpfig}
\end{figure}

\begin{lem}
\label{essentialclosure}
If $\phi^{-1}(h)\cap T_p = \{p \}$, then $h$ is essential.
\end{lem}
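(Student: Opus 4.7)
The plan is proof by contradiction: suppose there exists $q\in\phi^{-1}(h)$ with $q\neq p$, and analyse the unique tree geodesic $[p,q]$ in $T$ together with its image in $X$, aiming to contradict either the hypothesis or an earlier hyperplane lemma.

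The first step is to extract a strong geometric consequence of the hypothesis. For $p\in\a$, the tile $T_p$ contains the full $u$- and $v$-axes $\overleftrightarrow{\u}$ and $\overleftrightarrow{\v}$ in $T$ (see Figure \ref{tpfig}); the remaining cases for the location of $p$ are analogous. Since $\phi$ sends these subtrees onto the $X^{(1)}$-axes of $u$ and $v$ in $X$, the requirement $\phi^{-1}(h)\cap T_p=\{p\}$ forces $h$ to be disjoint from both $\phi(\overleftrightarrow{\u})$ and $\phi(\overleftrightarrow{\v})$ in $X$: any transverse intersection (or containment) would lift to a preimage of $h$ on one of the axes in $T_p$, distinct from $p\in\a$. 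This disjointness places us precisely in the hypothesis of Corollary \ref{separating2}.

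Next, I would exploit the $F$-equivariance of $\phi$: for every $w\in F$, $\phi^{-1}(h)\cap wT_p=w\bigl(\phi^{-1}(w^{-1}h)\cap T_p\bigr)$. Let $w_1p$ be the first boundary point of $T_p$ met by $[p,q]$. When $p\in\a$, the boundary of $T_p$ consists of the orbit points $\{u^ip,v^ip:i\neq 0\}$, so $w_1$ is a nontrivial power of $u$ or of $v$. The hypothesis applied to $w_1p\in T_p\setminus\{p\}$ gives $\phi(w_1p)=w_1\phi(p)\notin h$, hence $w_1h\neq h$. Corollary \ref{separating2} then produces a hyperplane $h_N$ meeting the axis of $w_1$ in $X$ which separates $h$ from $w_1h$ and misses every $w_1^i h$.

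The decisive final step is to convert the existence of $h_N$ back into a contradictory second preimage of $h$ inside $T_p$. Because $h_N$ crosses the $w_1$-axis (which equals $\phi(\overleftrightarrow{\u})$ or $\phi(\overleftrightarrow{\v})$, a subset of $\phi(T_p)$), there is a preimage of $h_N$ sitting on the corresponding axis inside $T_p$; meanwhile the image $\phi([p,q])$ starts and ends on $h$, and the separating property forces it to cross $h_N$ between $\phi(p)$ and $\phi(q)$. Combining the Slope Lemma (\ref{slope}) and the Parallel Axis Lemma (\ref{parallel}) with $F$-equivariance, I would argue that this configuration cannot be consistent unless some translate of $h$ itself meets $T_p$ away from $p$, contradicting the hypothesis. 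The main obstacle, and the technical heart of the proof, lies exactly in this upgrade: Corollary \ref{separating2} furnishes a preimage of $h_N$ rather than of $h$ in $T_p$, and converting this into a forbidden preimage of $h$ requires a delicate case analysis on the tile $wT_p$ containing $q$ and on the normal form of the word $w\in F$ that realizes it.
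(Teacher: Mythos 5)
There is a genuine gap at the decisive step. After invoking Corollary \ref{separating2} to produce $h_N$, you never actually derive a contradiction: the hypothesis $\phi^{-1}(h)\cap T_p=\{p\}$ constrains only preimages of $h$, not of $h_N$, so a preimage of $h_N$ lying on an axis inside $T_p$ is perfectly consistent with it; and $\phi([p,q])$ is only a concatenation of $X^{(1)}$-geodesic pieces, not an $X^{(1)}$-geodesic, so the fact that it crosses $h_N$ twice (once on each side of $\phi(w_1p)$) is not absurd either. You yourself flag that upgrading ``a preimage of $h_N$ in $T_p$'' to ``a forbidden second preimage of $h$ in $T_p$'' is the technical heart and leave it as an unexecuted case analysis, so the argument is incomplete exactly where the content lies. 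A secondary problem: the reduction to Corollary \ref{separating2} is not ``analogous'' for the other positions of $p$. If $p$ lies on a $\u$-type (resp.\ $\v$-type) edge, then $h$ meets that axis at $\phi(p)$, the first exit point of $[p,q]$ from $T_p$ can be $u^{\pm1}p$ (resp.\ $v^{\pm1}p$), and then $h$ is \emph{not} disjoint from the axes of $w_1$, so the corollary's hypotheses fail in precisely those cases.

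Your opening moves --- the first exit point $w_1p\in\partial T_p$ and the observation $w_1h\neq h$ --- are the right start; the detour through the Separating Lemma is what derails the proof, and it is unnecessary. Work with $w_1h$ itself: choose $q\in\phi^{-1}(h)\setminus\{p\}$ at \emph{minimal} distance from $p$ (possible because the preimage of a hyperplane in each edge of $T$ is finite). Since $h$ and $w_1h$ are distinct hyperplanes of the same type, they are disjoint, so $\phi(p)$ and $\phi(q)$, both lying on $h$, lie on the same side of $w_1h$, while $\phi([p,q])$ crosses $w_1h$ transversely at $\phi(w_1p)$. Hence $[p,q]$ meets $\phi^{-1}(w_1h)$ at a second point $q'$; both $q'$ and $w_1p$ lie on the segment $[p,q]$ and neither is an endpoint (their images are on $w_1h$, not $h$), so $d_T(w_1^{-1}q',p)=d_T(q',w_1p)<d_T(p,q)$, and $w_1^{-1}q'\in\phi^{-1}(h)$ contradicts the minimality of $q$. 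This is essentially the paper's proof, and it needs no hypotheses on where $p$ sits and no hyperplane machinery beyond disjointness of same-type hyperplanes and transversality of edge paths.
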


\begin{proof}
There are three types of edges in $T$, and within each type, the preimages of hyperplanes are identically distributed (and finite).  Thus if $h$ is not essential, we can pick a point $p'$ of $\phi^{-1}(h) \setminus \{p\}$ whose distance to $p$ in $T$ is minimal.  

Let $\gamma$ be the geodesic in $T$ from $p$ to $p'$.  Since $p' \notin T_p$ by hypothesis,  $\gamma$ contains some point $q = wp$ in the boundary of $T_p$. By assumption, $q \notin h$ so $wh \neq h$.  Since $p$ and $p'$ lie of the same side of $wh$, we conclude that $\gamma$ must cross $wh$ again at some $q'$, either between $p$ and $q$ or between $q$ and $p'$.   Translating by $w^{-1}$ we have that $w^{-1}q'$ lies in $\phi^{-1}(h)$ and is closer to $p$ than $p'$, contradicting our choice of $p'$.
\end{proof}

\subsection{The existence of essential hyperplanes}

The following proposition comprises much of the work in proving Theorem \ref{main}.  Recall the following notation:  Given a set of hyperplanes $\H$, and a geodesic $\gamma$, $\H_\gamma$ denotes the set of hyperplanes of $\H$ that intersect $\gamma$.

\begin{prop}
\label{existence}
If $\phi(F)$ is not abelian, then we can choose a basepoint $1$ and generators $u, v$ of $F$ such that for the associated $T$, there exists an essential hyperplane $h$ in $X$ meeting $[1, v]$.
\end{prop}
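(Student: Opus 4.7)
The plan is to produce an essential hyperplane crossing $\v$ (the middle segment of $[1,v]$) by combining the standard form of Lemma \ref{standard} with the structural lemmas of Section 3, using the flexibility in choosing basepoint and generators that the proposition allows. Since $\phi(F)=\langle\phi(u),\phi(v)\rangle$ is non-abelian, any choice of free generators $u,v$ of $F$ satisfies $[\phi(u),\phi(v)]\neq 1$. Fix such $u,v$ and place the basepoint $1$ on an $X^{(1)}$ axis of $\phi(u)$, so that $a_u=1$, and take $\u,\v,\a$ as in the previous section. By Lemma \ref{standard}, $[1,v]$ is the concatenation $\a\cdot\v\cdot v\a$ and no hyperplane crosses this concatenation twice.

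For any hyperplane $h$ crossing $\v$ at a single point $p$, the standard form observation above rules out crossings of $h$ with $\a$ or $v\a$, and Lemma \ref{once} rules out further crossings of $h$ with $\v$. By Lemma \ref{essentialclosure}, to conclude $h$ is essential it suffices to show that $h$ meets no edge of $T$ lying in $T_p$ other than $\v$ at $p$. The remaining edges in $T_p$ are the branches of the form $w\u$, $w\a$, $w\v$ emanating from $a$ and $va$ and their neighbors in $T$. By equivariance these are crossed by $h$ if and only if the translate $\phi(w)^{-1}h$ crosses $\u$, $\a$, or $\v$. Thus the task reduces to picking $h$ so that no non-trivial translate of $h$ under $\phi(F)$ intersects $\u\cup\a\cup\v$.

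The heart of the argument will be a case analysis on whether the candidate $h$ crosses the axis of $\phi(u)$. If $h$ is disjoint from this axis, the Parallel Axis Lemma \ref{parallel} and the Separating Lemma (Corollary \ref{separating2}) applied with $w=\phi(u)$ supply separating walls that prevent $h$ from coinciding with its $\phi(u)$-translates and, by iteration, from meeting $\u$-type branches of $T$; a parallel application with $w=\phi(v)$ controls the $\v$-type branches, noting that $h$ crossing $\v$ already prevents $\phi(v)^{k}h=h$ for $k\neq 0$ since a hyperbolic isometry cannot fix a point on its axis. If instead every hyperplane crossing $\v$ also crosses the axis of $\phi(u)$, we switch to a new generating pair of $F$, for example replacing $v$ by $u^{k}v$ or $u^{k}vu^{-k}$ for an appropriate integer $k$, so that the axis of the new $\phi(v')$ is shifted and some crossing hyperplane becomes disjoint from the axis of $\phi(u)$. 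The main obstacle will be showing that such a generator change always succeeds: this requires combining the non-commutativity of $\phi(u)$ and $\phi(v)$ with the Slope Lemma \ref{slope} to track how the set of hyperplanes crossing an axis varies as the axis is shifted, so that some choice of $k$ must break the offending coincidence between hyperplanes crossing $\v$ and those crossing translates of $\u$.
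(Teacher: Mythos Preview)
Your outline is not a proof and explicitly leaves its central difficulty open: you write ``The main obstacle will be showing that such a generator change always succeeds,'' and then offer only a vague appeal to the Slope Lemma. That obstacle is the substance of the proposition.

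More concretely, your first case (an $h$ crossing $\v$ but disjoint from $\overleftrightarrow{\u}$) does not go through. The Parallel Axis Lemma with $w=\phi(u)$ only applies once you know $uh\neq h$, which you never establish; if $uh=h$ then $h$ meets every $u^n[1,v]$ and is certainly not essential. Even granting $uh\neq h$, the lemma rules out $h$ meeting $u^n[1,v]$ for $n\neq 0$, but $T_p$ (for $p\in\v$) also contains the segments from $u^n$ toward $u^nv^{-1}p$, the entire axis $v\overleftrightarrow{\u}$, and the segments from $vu^n$ toward $vu^nv^{\pm1}p$. Nothing you invoke controls these. Your ``parallel application with $w=\phi(v)$'' is a misfire: $h$ \emph{does} cross $\overleftrightarrow{\v}$, so Lemma~\ref{parallel} does not apply with $w=v$. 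Finally, your reduction to ``no non-trivial translate of $h$ under $\phi(F)$ intersects $\u\cup\a\cup\v$'' is just the definition of essential restated; it discards the finiteness of $T_p$ that makes Lemma~\ref{essentialclosure} useful.

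The paper's argument is organized very differently. It fixes once and for all a hyperplane \emph{type} $\H$: one that crosses an axis of the commutator $c=[u,v]$. Since commutators of generating pairs of $F$ are all conjugate, $\H$ keeps this property under any change of basis. One then chooses $u,v$ and the basepoint to lexicographically minimize the triple $(\#\H_\u,\#\H_{[1,v]},\#\H_\v)$. The case split is on whether $\H_\u$ is empty. If not, one looks at the \emph{middle} hyperplanes $h_1,h_2$ of $\H_\u$ (and $h_1',h_2'$ of $\H_\v$) and uses minimality of the triple to show that one of each pair misses all four adjacent paths of the other type; the resulting $h_u$ and $h_v$ then mutually witness each other's essentiality via Lemma~\ref{essentialclosure}. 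If $\H_\u$ is empty, one takes an $h\in\H$ crossing the $c$-axis and carries out a five-region check of $T_p$; the commutator hypothesis is used to force $uh\neq h$ and $vuv^{-1}h\neq h$, and minimality (via a ``$v$-straightening'' change of generators) disposes of the remaining edges. The global minimization over generators and basepoint is doing exactly the work that your generator-shift sketch leaves undone.
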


\begin{proof}

Let $\{ u_\textrm{orig}, v_\textrm{orig}\}$ be a generating set for $F$.  Pick a hyperplane that crosses an axis of the commutator $c_\textrm{orig}=u_\textrm{orig} v_\textrm{orig}u_\textrm{orig}^{-1}v_\textrm{orig}^{-1} $.  Let $\H$ be the set of all hyperplanes of that type.

Now pick a new generating set $u$, $v$ and a basepoint $1$ such that the following triple of numbers is minimal (lexicographically) among all possible choices of $u$ , $v$ and $1$:
\begin{align*}
( \#\H_{\u} , \#\H_{[1, v]}, \#\H_{\v} ).
\end{align*}

We claim $\H_{[1, v]}$ contains an essential hyperplane. There are two cases which, unfortunately, admit almost no overlap in their discussion.

{\bf Case 1:} Suppose $\H_{\u}$ is nonempty.  We will argue that each of $\H_{\u} , \H_\v$ contains an essential hyperplane. 

Since the hyperplanes of $\H$ are pairwise disjoint, we can order the $\H_{\u}$ by distance from $1$, and let $h_1$, $h_2$ be the middle ones (if $\#\H_{\u}$ is odd, $h_1=h_2$). We claim that one of $h_1, h_2$ does not meet the nearby $v$-paths, seen also in Figure (\ref{doesntmeetfig}): 
\begin{align*}
[1, v], [1, v^{-1}], [u, uv], [u, uv^{-1}].
\end{align*}
\begin{figure}[ht]
\subcaptionbox{The four $v$ paths avoided by one of $h_1$ or $h_2$ \label{doesntmeetfig}}
[.4\linewidth]
{\def\svgwidth{.4\linewidth} 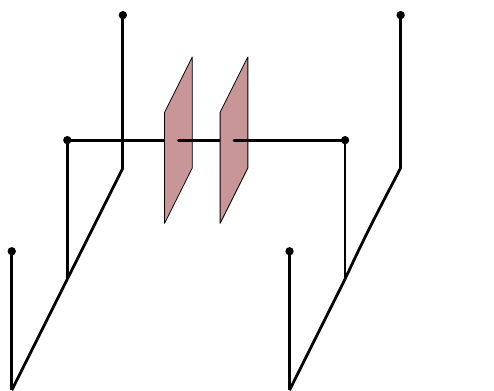}
\subcaptionbox{The hyperplanes between $1$, $u$ and $uv^{-1}$, assuming $h_1$ meets $[u, uv^{-1}]$ \label{farvpath}}
[.4\linewidth]
{\def\svgwidth{.4\linewidth} 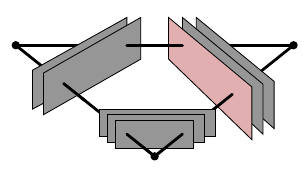}
\caption{$h_i$ and the nearby $v$-paths}
\end{figure}

Similarly if we know that $\H_\v$ is nonempty, otherwise we could switch $u$ and $v$ and take basepoint $a$ to reduce the triple.  Take $h_1'$ and $h_2'$ to be the middle hyperplanes of $\H_{\v}$.  Then these are also the middle hyperplanes of $\H_{[1,v]}$, since $\#\H_\a=\#\H_{v\a}$.  We make an analogous claim that one of $h_1'$ and $h_2'$ meets none of the nearby $u$-paths.  The following three facts establish our claims about the $h_i$ and $h_i'$ ($i \in \{1, 2\}$).

\begin{enumerate}
\item Neither $h_i$ meets either of the ``far" $v$-paths.  For instance, if $h_1$ met $[u, uv^{-1}]$, then $[1, uv^{-1}]$ would meet fewer hyperplanes than $[u,uv^{-1}]$, as seen in Figure \ref{farvpath}. Defining $v_\textrm{new} = uv^{-1}$ would give 
\begin{align*}
\#\H_{[1, v_\textrm{new}]} < \#\H_{[u, uv^{-1}]} =\#\H_{[1, v]}.
\end{align*}  
Thus $\{ u ,v_\textrm{new}\}$ would produce a smaller triple) than $\{ u, v\}$, violating our minimality assumption.  Similarly, neither $h_i'$ meets either of the far $u$-paths.

In the case that $h_1=h_2$, all four $v$-paths are ``far,'' so the claim is proven.

\item The $h_i$ do not each meet the near $v$-paths on the same side. For instance, suppose $h_1$ met $[1, v]$.  We claim that $h_2$ does not meet $[u, uv]$.  If it did, then $h_2 = uh_1$ as in Figure \ref{samesidepaths}, but no hyperplane meets $\overleftrightarrow{\u}$ twice.  Similarly, since $h_1'$ and $h_2'$ meet the axis of $v$, we have $h_1'$ and $h_2'$ do not both meet the near $u$-paths on the same side.

\item The $h_i$ do not each meet the near $v$-paths on the opposite sides.  For instance, suppose $h_1$ met $[1, v]$ and $h_2$ met $[u, uv^{-1}]$.  

Choose a point $e \in \u$ between $h_1$ and $h_2$.  Notice $\H_{[1, e]}$ consists of $h_1$ and all the hyperplanes of $\H$ that separate it from $1$, while $\H_{[e, u]}$ consists of $h_2$ and all the hyperplanes of $\H$ that separate it from $u$.

By part (1), $h_1$ does not meet the far $v$-path $[u, uv^{-1}]$, so it meets $[1, vu^{-1}]$.  Also by part (1) no hyperplane of $\H_{[1, v]}$ meets both $[1, u]$ and $[v, vu^{-1}]$ (one of these $u$-paths would be ``far").  Since $h_1 \in \H_{[1, v]}$, it meets $[u, uv^{-1}]$ (see Figure \ref{oppsidepaths}).

Thus if we set $v_\textrm{new}=uv^{-1}$ we have that $h_1$ meets all of $[1, u], [1,v_\textrm{new}]$ and  $[1, v^{-1}_\textrm{new}]$.   
So $\H_{[1, e]}$ is a subset of $\H_{\u}$,  $\H_{[1, v_\textrm{new}]}$, and  $\H_{[1, v^{-1}_\textrm{new}]}$.  Translating the last inclusion by $v_\textrm{new}$ gives $\H_{[v_\textrm{new}, v_\textrm{new}e]} \subset \H_{[1, v_\textrm{new}]}$.  Let's see what this implies.

\begin{align*}
\H_{[e, ue]}&= \H_{[1, e]} \Delta \H_\u   \Delta \H_{[u, ue]} =  ( \H_\u - \H_{[1, e]} ) \cup \Delta \H_{[u, ue]} \\
\textrm{so }\# \H_{[e, ue]} &= \#\H_\u.  \\
\\
\H_{[e, v_\textrm{new}e]} &=\H_{[1, e]} \Delta \H_{[1, v_\textrm{new}]} \Delta \H_{[v_\textrm{new}, v_\textrm{new}e]} \\ 
&=\H_{[1, v_\textrm{new}]} - (\H_{[1, e]} \Delta \H_{[v_\textrm{new}, v_\textrm{new}e]} ) \subsetneq  \H_{[1, v_\textrm{new}]} \\
\textrm{so }\# \H_{[e, v_\textrm{new}e]} &< \# \H_{[1, v_\textrm{new}].} 
\end{align*}

Out hypothesis on $h_2$ gives $\H_{[e, u]}= \H_u \cap  \H_{[u,uv]}$, so  $\H_{[1, v_\textrm{new}]} =  \H_u \Delta \H_{[u,uv]} = \H_{[1, e]} \sqcup (\H_{[u,uv]}-\H_{[e, u]})$.  So $\# \H_{[1, v_\textrm{new}]}= \# \H_{[1, v]}$.

This means that replacing $v$ by $v_\textrm{new}$ and shifting the basepoint to $e$ would strictly reduce the triple, violating our assumptions about its minimality under $u$, $v$ and $1$.  The $v$ case is similar.
\end{enumerate}

\begin{figure}[ht]
\subcaptionbox{Meeting the same side paths means $h_1$ and $h_2$ don't cross $\overleftrightarrow{u}$. \label{samesidepaths}}
[.4\linewidth]
{\def\svgwidth{.4\linewidth} 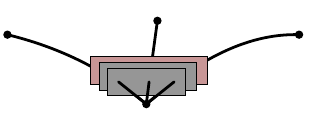}
\subcaptionbox{The points $e$ and $v_\textrm{new}=uv^{-1}$ when $h_1$ and $h_2$ meet opposite side paths \label{oppsidepaths}}
[.4\linewidth]
{\def\svgwidth{.4\linewidth} 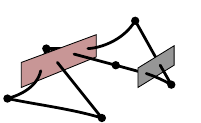}
\caption{Illustration of facts (2) and (3)}
\end{figure}

This proves our claim about $h_1$ and $h_2$ in $\H_\u$.  Call one that meets none of the nearby $v$-paths $h_u$.  Similarly call one of $h_1', h_2'$ that meets none of the nearby $u$-paths $h_v$.   Furthermore, $h_v$ is separated from each edge $[v^n, v^nu^{\pm 1}]_{n \notin \{0, 1\}}$ by one of its own $v$-translates.  

We'll use Lemma \ref{essentialclosure} to show that $h_u$ is essential.  Consider $p=h_u \cap \u$.  Then to see that $T_p \cap h_u = \{p\}$ we check:

\begin{enumerate}
\item $\overleftrightarrow{\u} \cap h_u = \{p\}$
\item $h_u$ doesn't meet the four nearby $v$-paths: $[1, v], [1, v^{-1}], [u, uv], [u, uv^{-1}]$ by our choice of $h_u$.
\item $h_u$ does not meet any point that lies across any of the hyperplanes: $h_v, uh_v, v^{-1}h_v$, or  $uv^{-1}h_v$ (see Figure \ref{case1tp}), since none of these cross $h_u$.
\end{enumerate}

\begin{figure}[ht]
\centering
\def\svgwidth{90mm}
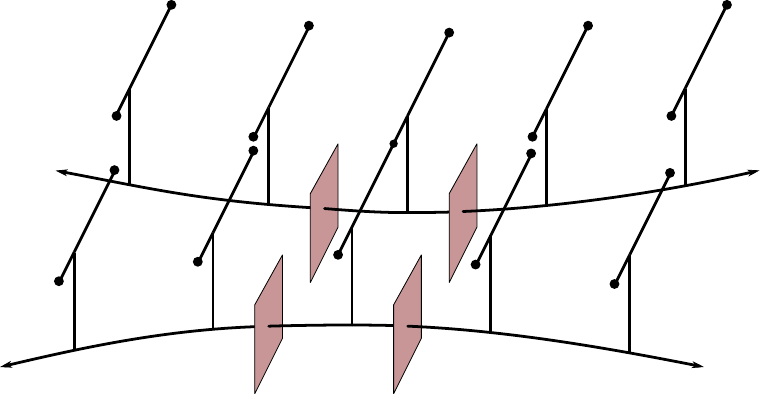
\caption[Verifying $h_u$ is essential]{Verifying $h_u$ is essential.  Notice $h_u$ doesn't cross these 4 translates of $h_v$.} 
\label{case1tp}
\end{figure}

This verifies that $T_p \cap h_u = \{p\}$  and by Lemma \ref{essentialclosure}, $h_u$ is essential.  By identical reasoning, so is $h_v$.  As it lies in $\H_{[1,v]}$, $h_v$ is the hyperplane we sought.

{\bf Case 2:} Suppose $\H_{\u}$ is empty.  Recall that $\H$ is the set of all hyperplanes of some type that crosses the axis of $c_\textrm{orig}$, the commutator of the original generators of $F$.  However, one can check that $c=uvu^{-1}v^{-1}$ is equal to $g c_\textrm{orig}^{\pm 1} g^{-1}$ for some $g \in A$ (the reader can check this on his or her favorite generating set of $Aut(F)$).  Thus $\H$ also contains a hyperplane that crosses any axis of $c$.  Consider a standard decomposition of $[1, c]$ into $\a_c$, $\axisc$ and $c \a_c$.  Since $\H$ is $c$-invariant, there is some $h \in \H$ that meets $c^{-1}\axisc$.  Our goal is to show that this $h$ is essential.  

Consider the geodesic from $1$ to $c^{-1}$ in $T$.  It maps to a piecewise geodesic path $\xi$ in $X$, which is a concatenation of two translates of $[1,v]$ and two translates of $\u$.  Since $\H_u$ is empty, $h$ meets $\xi$ at either $[1, v]$ or $[vu, vuv^{-1}]$, but not both (see Figure \ref{xi}).  Assume, without loss of generality, that it meets $[1, v]$ at $p$.   

\begin{figure}[ht]
\centering
\def\svgwidth{90mm}
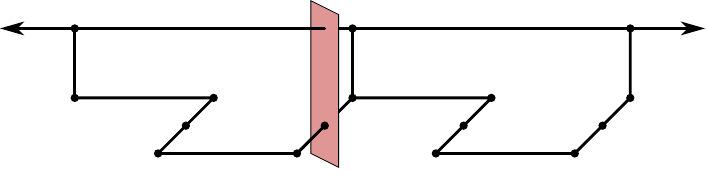
\caption[The intersection of $h$ with $\xi$]{The intersection of $\phi^{-1}(h)$ with $\xi$.  If $h \in \H_{[1,v]}$ intersects $c^{-1}\axisc$ and $\H_\u$ is empty, then $\phi^{-1}(h)$ meets $\xi$ exactly once and doesn't meet $c^{-1}\xi$ at all.}
\label{xi}
\end{figure}

We produce two facts about the stabilizer of $h$.  Since $vuv^{-1}p \notin h$, we know $vuv^{-1}h \neq h$.  We also claim that $uh \neq h$.  If we suppose otherwise, then $h$ meets $c\xi$ at $up$.  Since $h$ cannot cross $[1, c]$ or any translate of $\u$, it must meet $c\xi$ a second time, and that meeting must be some $q  \in [uvu^{-1}, c]$.  Since $h$ doesn't meet $uv\overleftrightarrow{\u}$, the Parallel Axis Lemma says that $q=cp$.  But if $ch=h$, then $h$ cannot meet $\overleftrightarrow{\axisc}$ at all, violating our choice of $h$.  Thus $uh \neq h$ as claimed.  We'll use the fact that neither $uh$ nor $vuv^{-1}h$ is equal to $h$ in the argument that follows.
 
We'll now argue that $h$ is essential using Lemma \ref{essentialclosure}.  To do this we will consider the intersection of $h$ with the following 5 sets (illustrated in Figure \ref{case2}).  The preimages of these sets cover $T_p$ (where $p$ now refers to $\phi^{-1}(h) \cap [1,v] \in T$) regardless of whether $p$ lies on $\a$, $\v$ or $v\a$.  Computing these intersections will show that $\phi^{-1}(h) \cap T_p = \{p\}$.  

\begin{enumerate}
\item $\displaystyle \V = \overleftrightarrow{\v} \cup \bigcup_{n \in \mathbb{Z}} v^n\a$.  $h$ meets $\V$ only at $p$, by Lemma \ref{standard}.
\item $\overleftrightarrow{\u} \cup v\overleftrightarrow{\u}$.  The intersection of $h$ with these axes is empty, because no $F$-translate of $h$ meets $\u$.
\item $\displaystyle \bigcup_{n \neq 0} [u^n, u^nv]$.  Since $h$ doesn't meet $\overleftrightarrow{\u}$, the Parallel Axis Lemma (\ref{parallel}) states that if $h$ meets one of these edges then $uh =h$. We've shown that this isn't the case, so $h$ does not intersect this set.
\item $\displaystyle \bigcup_{n \neq 0} [vu^nv^{-1}, vu^n]$.  Note as in the previous step that $h$ doesn't meet $v\overleftrightarrow{\u}$ and $vuv^{-1}h \neq h$. Thus $h$ does not intersect this set.
\item Finally, in the case $p \in \v$,  the region $T_p$ overlaps the paths $[u^nv^{-1}, u^n]$ and $[vu^n, vu^nv]$.  We'll use the following {\bf $v$-straightening} argument to show that $h$ doesn't meet $[u^nv^{-1}, u^n]$, noting that a similar one exists for $[vu^n, vu^nv]$:

Suppose $h$ meets $[u^nv^{-1}, u^n]$.  Since $\H_\u$ is empty, $[1, v]$ meets the same hyperplanes as $[1, vu^{-n}]$.  So if we set  $v_\textrm{new}=vu^{-n}$, then $\H_{[1, v_\textrm{new}]}=\H_{[1,v]}$.   We may then produce a $T_\textrm{new}$ with $\v_\textrm{new}$ and $\a_\textrm{new}$.  But $h$ and every hyperplane of $\H$ between $1$ and $h$ (including those of $\H_\a$), meet both $[1, v_\textrm{new}]$ and $[1, v^{-1}_\textrm{new}]$.  Thus $h \cup \H_\a \subset \H_{\a_\textrm{new}}$, which means $\# \H_{\v_\textrm{new}} <  \#\H_\v$, violating the minimality of the triple.
\end{enumerate}

\begin{figure}[ht]
\centering
\def\svgwidth{90mm}
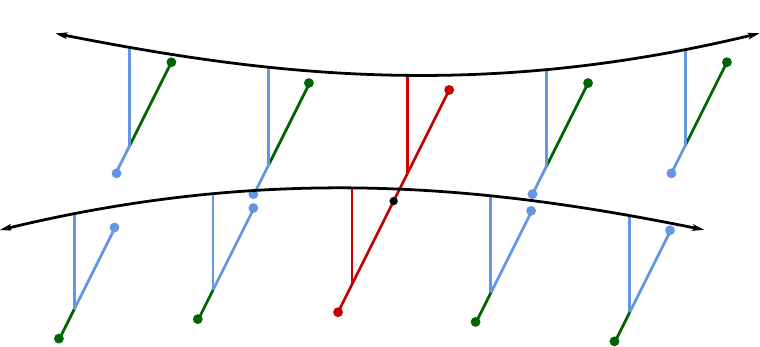
\caption[$T_p$ is divided into 5 regions]{$T_p$ is divided into 5 regions to verify that $h \cap T_p = \{p\}$.  This is the $p \in \v$ case.}
\label{case2}
\end{figure}

This establishes that $\phi^{-1}(h) \cap T_p = \{p\}$.  By Lemma \ref{essentialclosure}, $h$ is essential.
\end{proof}

The following corollary may be of interest, but we won't use it here.

\begin{corollary}
For $F \to A$ a homomorphism and any hyperplane $h$, there is a choice of generators $u, v \in F$ and a basepoint $1$ such that either
\begin{enumerate}
\item $h$ does not intersect $T$.
\item There exists $w\in F$ such that $wh$ is essential at some $p \in [1,v]$.
\item There exists $w\in F$ such that $wh$ meets $[1, v]$ and either $uh=h$ or $vuv^{-1}h=h$ (possibly both).
\end{enumerate}
\end{corollary}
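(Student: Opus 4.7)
The plan is to adapt the machinery of Proposition~\ref{existence}, but with $\H$ replaced throughout by the $F$-orbit $F\cdot h$. Every element of $F\cdot h$ has the same type as $h$, so these hyperplanes are pairwise disjoint---exactly the property of $\H$ that is exploited throughout the proof of Proposition~\ref{existence}. First I would handle option~(1): if there is a choice of generators $u,v\in F$ and basepoint $1$ for which $(F\cdot h)\cap T=\emptyset$, then in particular $h$ itself does not meet $T$, so option~(1) holds. Otherwise, set $\H=F\cdot h$ and minimize the lexicographic triple $(\#\H_\u,\,\#\H_{[1,v]},\,\#\H_\v)$ over all choices of $u,v,1$. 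The swap-and-rebase argument from the proof of Proposition~\ref{existence} forces that at the minimum, $\#\H_\u$ and $\#\H_\v$ are either both zero or both positive.

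If both are positive, the analysis of Case~1 of Proposition~\ref{existence}'s proof applies essentially verbatim---it uses only pairwise disjointness of $\H$ and minimality of the triple---and produces an essential hyperplane $h_v\in\H_\v\subset\H_{[1,v]}$. Since $h_v\in F\cdot h$, we have $h_v=wh$ for some $w\in F$, giving option~(2). If both are zero but $\H\cap T\neq\emptyset$, then because the $F$-translates of $\u,\a,\v$ cover $T$, some $h'\in\H$ must meet $\a$; write $h'=wh$, so that $wh$ meets $\a\subset[1,v]$. I would then attempt the Case~2 argument of Proposition~\ref{existence} on $h'$: if $uh'\neq h'$ and $vuv^{-1}h'\neq h'$, the five-region verification of $\phi^{-1}(h')\cap T_{p'}=\{p'\}$---relying on the Standard Form Lemma, the Parallel Axis Lemma, and the $v$-straightening step, all of which depend only on $F$-invariance of $\H$ and $\H_\u=\emptyset$---establishes essentiality of $h'$, yielding option~(2).

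If instead $uh'=h'$, then $(w^{-1}uw)h=h$, so $w^{-1}uw\in\stab_A(h)\cap F$. Replacing the generators and basepoint by the conjugate data $(u_{\mathrm{new}},v_{\mathrm{new}},1_{\mathrm{new}}):=(w^{-1}uw,\,w^{-1}vw,\,w^{-1}\!\cdot 1)$ gives the same tree $T$ up to relabeling, and in these new coordinates we have $u_{\mathrm{new}}h=h$, while $h$ itself meets the new $[1,v]$ (which is the $w^{-1}$-translate of the old $[1,v]$ and so is crossed by $h=w^{-1}h'$). This yields option~(3) with $w=1$ in the new coordinates; the case $vuv^{-1}h'=h'$ is handled identically by the same conjugation.

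I expect the main obstacle to be the Case~2 analog: Proposition~\ref{existence}'s proof derived the inequalities $uh\neq h$ and $vuv^{-1}h\neq h$ specifically from the commutator axis $\axisc$, which we lack in the corollary. The plan is to take these inequalities as case hypotheses and treat their failure as precisely the trigger for option~(3). The delicate step will be auditing the five-region argument and the $v$-straightening step to confirm they depend only on minimality of the triple and $F$-invariance of $\H$, not on any special commutator-axis property.
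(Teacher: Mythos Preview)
Your approach is the paper's---run the machinery of Proposition~\ref{existence} with $\H = Fh$---and your handling of Case~2 (treat $uh'\neq h'$ and $vuv^{-1}h'\neq h'$ as case hypotheses, with their failure landing you in option~(3)) is exactly what the paper does. But you miss the paper's central observation about Case~1: when $\H$ is a single $F$-orbit, Case~1 is \emph{impossible}. If $\H_\u > 0$ at the minimum, the Case~1 argument produces essential $h_u \in \H_\u$ and $h_v \in \H_\v$; but both lie in $Fh$, so $h_u = wh_v$ for some $w \in F$, and then $h_v$ meets $w^{-1}\u$ as well as $\v$---contradicting essentiality. Thus $\H_\u = 0$ at the minimum, always, and only Case~2 is ever in play. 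Your Case~1 branch is vacuous rather than a substantive route to option~(2).

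This same oversight produces a genuine gap in your case split. The swap-and-rebase argument gives only one implication ($\H_\u > 0 \Rightarrow \H_\v > 0$), not the symmetric one, so ``both zero or both positive'' is false: at the minimum one can have $\H_\u = 0$ and $\H_\v > 0$ (take $F = A_\Gamma$ free of rank two and $h$ a hyperplane of the $v$-type). In that situation the relevant $h'$ may meet $\v$ rather than $\a$, and your Case~2 setup---which concludes $h'$ meets $\a$ from $\H_\u=\H_\v=0$---does not cover it. The fix is easy, since the five-region verification you cite already handles $p \in \v$ via the $v$-straightening step; but as written your dichotomy is incorrect and the argument incomplete.
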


\begin{proof}
Apply the methods of the previous proposition to $\H=Fh$.  Assuming the conditions of case 1 leads to essential hyperplanes $h_u \in \H_\u$ and $h_v \in \H_{\v}$.  Since $h_u = wh_v$ for some $w \in F$, we have that $h_v$ meets $w^{-1}\u$.  This contradicts the claim that it is essential.  In case 2 we showed that every hyperplane of $\H_{[1, v]}$ is essential unless $uh=h$ or $vuv^{-1}h=h$.  So either $\H_{[1, v]}$ is empty (along with $\H_\u$ by assumption) or some $w$-translate of $h$ satisfies (2) or (3).
\end{proof}

\subsection{The main theorem}

Using Proposition \ref{existence}, we are ready to prove the main theorem: that $\phi$ is a quasi-isometric embedding.

\begin{proof}[Proof of Theorem \ref{main}]
For a word $w$ of length $n$, we know that $$|\phi(w) | \leq n \max \{|\phi(u) |,|\phi(v) | \}.$$  It remains to find a lower bound that is linear in $n$.  We'll produce a pair of essential hyperplanes, meeting \u and \v.  We'll consider their orbits $Fh_u$ and $Fh_v$, and count how many of these hyperplanes cross $[1,w]$.  Note that in case 1 of Proposition \ref{existence} we already produced an adequate pair of essential hyperplanes, but the methods here only rely on the assumption that some essential $h$ crosses $[1,v]$.

Take the choice of $u, v$ and a basepoint from Proposition \ref{existence}.  Consider all possible generators $v_\textrm{new}=u^{n_1}vu^{n_2}$, and the $T_\textrm{new}$ generated by $\langle u , v_\textrm{new} \rangle$, but keeping the same basepoint.  Let $h$ be the hyperplane of $X$ that lies closest to $\overleftrightarrow{u}$ among all essential (with respect to $T_\textrm{new}$) hyperplanes meeting $[1, v_\textrm{new}]$.  Such hyperplanes exist by Proposition \ref{existence}, and a closest one exists because $\overleftrightarrow{\u}$ is the $u$ orbit of the compact set $\u$.  Let $p = \phi^{-1}(h)$.

The Separating Lemma (\ref{separating}) part (1) states that there is a hyperplane $h' \dis h$ between $h$ and $uh$.  Since any $F$-translate $h$ meets $T$ only once, we know $h' \notin Fh$ so it is disjoint from all $F$-translates of $h$.  As a result, $h' \cap T$ is contained in $T_p$.  In fact, it is contained in the component of $T_p -\{p\}$ that contains $uh$.  

We claim $h'$ crosses $\overleftrightarrow{\u}$.  Suppose it does not.  We will argue that $h'$ satisfies the conditions that lead us to choose $h$ but lies closer to $\overleftrightarrow{\u}$.  $h'$ meets $[p, up]$, so $h'$ meets either $[1,p]$ or $[u, up]$ but not both.  This means that $uh' \neq h'$.  Without loss of generality, suppose it meets $[1,p]$.  The contrapositive of the Parallel Axis Lemma (\ref{parallel}) therefore applies, meaning $h'$ does not meet any $[u^n, u^np]$ for $n \neq 0$.  The only other possible point of $\phi^{-1}(h')$ is on some $[u^nv^{-1}, u^n]$.  If $h'$ doesn't meet any such edge, then it is essential and closer to $\overleftrightarrow{\u}$ than $h$.  This contradicts our choice of $h$.  If $h'$ does meet this edge, then set $v_\textrm{new}=vu^{-n}$, and one may check that $T_\textrm{new} \cap \phi^{-1}(h')$ is a single point on $\a_\textrm{new}$.  Thus $h'$ would be essential for this $v_{\textrm{new}}$ and closer to $\overleftrightarrow{\u}$ than $h$.  This also contradicts our choice of $h$.

So $h'$ meets some $u^m \u$.  Now we claim that for some $v_\textrm{new}$, both $h$ and $h'$ are essential on $T_\textrm{new}$.  That $h$ is essential is immediate.  If $v_\textrm{new}=u^{n_1}vu^{n_2}$, then $[1, v_\textrm{new}]$ meets $u^{n_1}h$ and no other $F$-translates.  Thus $u^{n_1}h$ is essential on $[1, v_\textrm{new}]$.

As noted above, $h' \cap T$ is limited to one component of $T_p - \{p\}$.  If $h'$ meets any $u$-translate of $[1,p]$, then let $u^{k_1}[1,p]$ be the one farthest from $u^m\u$.  Repeated applications of the Slope Lemma show that only finitely many translates of $[1,p]$ meet $h'$.   They also show that $h'$ meets every translate of $[1,p]$ between $u^{k_1}[1, p]$ and $u^m\u$.  We define $u^{k_2}[1,v^{-1}p]$ similarly to produce the farthest translate meeting $h'$, and apply the Slope Lemma similarly. Then we can choose $v_\textrm{new}=u^{\sigma(m, k_1)}vu^{-\sigma(m, k_2)}$ where 

\begin{align*}
\sigma(m, k) = \begin{cases} k-m-1 & \textrm{if } k \leq m \\
k-m & \textrm{if } k > m \\
0 & \textrm{if } k \textrm{ is not defined}
\end{cases}
\end{align*}

One can check that $h'$ is essential, meeting $T_\textrm{new}$ only once, on $u^m\u$.

\begin{figure}[ht]
\centering
\def\svgwidth{90mm}
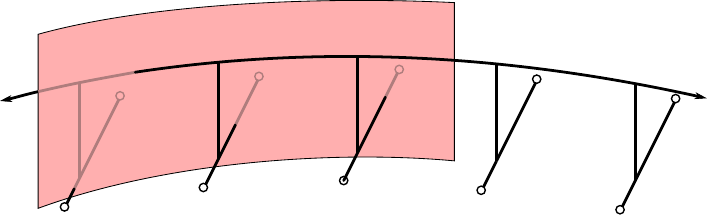
\caption[An example of a non-essential $h'$ meeting $u^2\u$]{An example of a non-essential $h'$ meeting $u^2\u$.  The set $h' \cap T$ is limited to one component of $T_p-\{p\}$.   In this case $k_1=1$ and $k_2=3$.}
\label{uhyperplane}
\end{figure}

Now let $h_u=u^{-m}h'$.  As noted above $u^{\sigma(m, k_1)}h$ is essential.  It lies on either $\v_\textrm{new}$,  $\a_\textrm{new}$ or $v_\textrm{new}\a_\textrm{new}$.  In the first case, we'll call it $h_v$.  

In the second two cases, Corollary \ref{separating2} produces a $h_v$ meeting $\v_\textrm{new}$ that doesn't intersect $u^{\sigma(m, k_1)}h$ or any $u^{\sigma(m, k_1)}v_\textrm{new}^nh$.   This restriction, along with the Standard Form Lemma (\ref{standard}), means that $h_v$ is essential.  

Either way we now have essential hyperplanes $h_u$ and $h_v$.  The group $F$ acts freely on the orbits $Fh_u$ and $Fh_v$, which consist entirely of essential hyperplanes.  Any geodesic in $T_\textrm{new}$ which crosses the preimage $\phi^{-1}(h)$ of an essential hyperplane $h$ will never cross that preimage again. Since we have one such hyperplane in $Fh_u \cup Fh_v$ for each translate of $\u$ and $\v_\textrm{new}$, a reduced word $w$ of length $n$ in $\{u, v_\textrm{new}\}$ will cross exactly $n$ hyperplanes of $Fh_u \cup Fh_v$.

We have been flexible with our choice of generators.  Given a different set, say $\{ u_\textrm{orig}, v_\textrm{orig}\}$, we might have to compose with some automorphism of $F$.  But automorphisms are quasi-isometries, and quasi-isometric embeddings are closed under composition.
\end{proof}

\begin{rem}
Our procedure is nearly algorithmic.  While some steps assume that a certain quantity is minimized, we also showed explicitly how to reduce it, should the properties we required be absent.  Computationally one needs to be able to check whether some $wh$ is equal to $h$ and whether some $[p,q]$ crosses $h$.  One also must be able to produce the hyperplanes of the Separating Lemma.  Neither of these tools is more difficult than placing an element of $A_\Gamma$ in a standard form.  Given those computational tools, the initial description of $\phi$, and the steps of the proof, one can produce $1$, $u$, $v$, and the essential $h_u$, $h_v$.  
\end{rem}

The non-abelian case of Baudisch's Theorem is recovered by the following corollary.  The results listed in the introduction follow quickly.

\begin{corollary}
If $\phi(F)$ is not abelian, then $\phi$ is an injection.
\end{corollary}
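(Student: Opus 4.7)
The plan is to derive this directly from Theorem \ref{main}, using only the standard fact that a quasi-isometric embedding from a finitely generated group into any metric space has trivial kernel whenever the domain is torsion-free.

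First, since $\phi(F)$ is non-abelian, the images $\phi(u)$ and $\phi(v)$ of the free generators of $F$ do not commute, so Theorem \ref{main} applies: $\phi:F\to A_\Gamma$ is a quasi-isometric embedding with some constants $\lambda\geq 1$ and $\epsilon\geq 0$. Next, I would suppose for contradiction that $\ker\phi$ contains some $w\neq 1$. Then $\phi(w^n)=1$ for every integer $n$, so in the word metric on $A_\Gamma$ we have $d_{A_\Gamma}(\phi(1),\phi(w^n))=0$.

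Applying the lower bound from the quasi-isometric embedding inequality gives
$$\tfrac{1}{\lambda}\,|w^n|_F-\epsilon \;\leq\; d_{A_\Gamma}(\phi(1),\phi(w^n))\;=\;0,$$
so $|w^n|_F\leq\lambda\epsilon$ for every $n\in\mathbb{Z}$. Since $F$ is free and $w\neq 1$, the cyclic subgroup $\langle w\rangle$ is infinite, and a standard fact about word length in free groups is that $|w^n|_F\to\infty$ as $|n|\to\infty$ (indeed $|w^n|_F\geq n$ after cyclic reduction). This contradicts the uniform bound $\lambda\epsilon$, so $\ker\phi=\{1\}$ and $\phi$ is injective.

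The entire argument is essentially a one-line application of Theorem \ref{main}; the only step worth spelling out is the observation that powers of a nontrivial element in a free group have unbounded word length, which rules out nontrivial kernel elements for any quasi-isometric embedding out of $F$. There is no real obstacle here — the substance of the corollary is already contained in the main theorem.
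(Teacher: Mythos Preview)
Your proof is correct and follows essentially the same approach as the paper's: both invoke Theorem \ref{main} to obtain the quasi-isometry constants, then use the unbounded growth of $|w^n|$ for nontrivial $w\in F$ to contradict the lower quasi-isometry bound when $\phi(w)=1$. The paper phrases the growth step as ``$w$ translates along its axis in $T$ by some distance $d$,'' but this is the same observation as yours about cyclic reduction.
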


\begin{proof}
By the previous theorem, given any choice of generators (and hence word metric), $\phi$ is a quasi isometric embedding.  There is a $\lambda \geq 1$ and $\epsilon\geq 0 $ such that for all $w\in F$,   

$$\frac{1}{\lambda}|w| -\epsilon \leq |\phi(w)|.$$ 

Suppose $w\in F$ is not the identity. It translates along its axis in $T$ by some distance $d$.  For sufficiently large $n$, we have $|w^n|\geq nd >\lambda\epsilon$.  But then we have 

$$0 = \frac{1}{\lambda}\lambda\epsilon -\epsilon < \frac{1}{\lambda}|w^n| -\epsilon  \leq |\phi(w^n)|.$$

Since $\phi(w^n)$ is not the identity, neither is $\phi(w)$.
\end{proof}

\begin{proof}[Proof of Theorem \ref{2gen}]
Let $H$ be a two-generator subgroup of $A_\Gamma$.  RAAGs  are torsion free \cite[2.3]{Bau81}, so if $H$ is abelian, it is free abelian. If $H$ is not abelian, then apply the previous lemma.
\end{proof}

\begin{proof}[Proof of Corollary \ref{2genqie}]
As above, suppose first $H$ is abelian.  Free abelian subgroups of isometries of $X$ are quasi-isometrically embedded in $X$ (and hence in $A_\Gamma$) by the Flat Torus Theorem (presented in \cite[II.7.1 and II.7.17]{bh99}).  If $H$ is not abelian then Theorem \ref{main} applies.
\end{proof}

\section{Counterexamples in other settings}

\subsection{A three-generator subgroup}

The result of Baudisch cannot be extended to subgroups of more generators.  He produces a counterexample himself \cite[6.2]{Bau81}, which we repeat here.  Let $\Gamma$ be the following graph:

\begin{center}
\def\svgwidth{25mm}
\begingroup%
  \makeatletter%
  \providecommand\color[2][]{%
    \errmessage{(Inkscape) Color is used for the text in Inkscape, but the package 'color.sty' is not loaded}%
    \renewcommand\color[2][]{}%
  }%
  \providecommand\transparent[1]{%
    \errmessage{(Inkscape) Transparency is used (non-zero) for the text in Inkscape, but the package 'transparent.sty' is not loaded}%
    \renewcommand\transparent[1]{}%
  }%
  \providecommand\rotatebox[2]{#2}%
  \ifx\svgwidth\undefined%
    \setlength{\unitlength}{42.7828125bp}%
    \ifx\svgscale\undefined%
      \relax%
    \else%
      \setlength{\unitlength}{\unitlength * \real{\svgscale}}%
    \fi%
  \else%
    \setlength{\unitlength}{\svgwidth}%
  \fi%
  \global\let\svgwidth\undefined%
  \global\let\svgscale\undefined%
  \makeatother%
  \begin{picture}(1,0.65328147)%
    \put(0,0){\includegraphics[width=\unitlength]{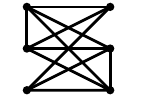}}%
    \put(-0.00613564,0.56951901){\color[rgb]{0,0,0}\makebox(0,0)[lb]{\smash{$a$}}}%
    \put(-0.00613564,0.28903254){\color[rgb]{0,0,0}\makebox(0,0)[lb]{\smash{$b$}}}%
    \put(-0.00613564,0.00854607){\color[rgb]{0,0,0}\makebox(0,0)[lb]{\smash{$c$}}}%
    \put(0.83532376,0.56951901){\color[rgb]{0,0,0}\makebox(0,0)[lb]{\smash{$x$}}}%
    \put(0.83532376,0.28903254){\color[rgb]{0,0,0}\makebox(0,0)[lb]{\smash{$y$}}}%
    \put(0.83532376,0.00854607){\color[rgb]{0,0,0}\makebox(0,0)[lb]{\smash{$z$}}}%
  \end{picture}%
\endgroup%

\end{center}

Write $F_3= \langle u, v, w\rangle$ and let $\phi: F_3 \to A_\Gamma$ be defined by 
\begin{align*}
\phi(u)=ax \qquad \phi(v)=by \qquad \phi(w)=cz 
\end{align*}

Notice that no two of $\phi(u), \phi(v), \phi(w)$ commute, yet Baudisch shows that the image of $\phi$ is not a free group, meaning that $\phi$ is not an injection.  He goes farther and shows that the image of $\phi$ not even a RAAG.   

\subsection{An obstruction to embedding in a RAAG}

Consider the group $\pi_1(M)$, where $M$ is the figure-8 knot complement.  $\pi_1(M)$ can be realized as a semi-direct product $F_2 \rtimes \mathbb{Z}$ with the following presentation.
\begin{align*}
\pi_1(M)=\langle a, b, t ~|~ tat^{-1}=ab, ~ tbt^{-1}=bab \rangle
\end{align*}

Now let $F_2=\langle a, b\rangle$ and $i: F_2 \to \pi_1(M)$ be the inclusion.  The subgroup $F_2$ is free.

Consider the element $t^nat^{-n}$, which has length at most $2n+1$ in $\pi_1(M)$.  We can equate this to a word in $a, b$.  We use the relations to cancel a power of $t$ and $t^{-1}$ and repeat $n$ times.  Each cancellation at least doubles the number of $a$'s and $b$'s.  No $a$'s or $b$'s ever cancel, since we only produce positive powers of $a$ and $b$.  The result is that $t^nat^{-n}$ is equal to a word of length at least $2^n$ in $a$ and $b$.  Thus there is an infinite sequence of elements $w_n \in F_2$ such that $|i(w_n)| \leq 2 \log_2 |w_n| +1$.    Thus $i$ is not a quasi-isometric embedding.  We can thus apply the following corollary to $\pi_1(M)$.

\begin{corollary}\label{noembed}
Let $G$ be any finitely-generated group with a free two-generator subgroup $F_2=\langle a, b\rangle$ that is not quasi-isometrically embedded.  No RAAG has a subgroup isomorphic to $G$.
\end{corollary}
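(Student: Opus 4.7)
The plan is to argue by contrapositive: suppose $G$ does embed as a subgroup of some RAAG $A_\Gamma$ via an injective homomorphism $j \colon G \hookrightarrow A_\Gamma$, and deduce that the inclusion $i \colon F_2 \hookrightarrow G$ is a quasi-isometric embedding, contradicting the hypothesis of the corollary.

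First I would form the composition $\psi = j \circ i \colon F_2 \to A_\Gamma$. As a composition of injective homomorphisms, $\psi$ is injective; in particular $\psi(a)$ and $\psi(b)$ cannot commute, since otherwise $\psi(F_2)$ would be abelian and injectivity would force $F_2$ itself to be abelian. Thus the hypothesis of Theorem \ref{main} is met, and the theorem yields constants $\lambda \geq 1$ and $\epsilon \geq 0$ with
$$\tfrac{1}{\lambda}|w|_{F_2} - \epsilon \;\leq\; |\psi(w)|_{A_\Gamma} \quad \text{for all } w \in F_2.$$

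Next I would invoke the elementary fact that any homomorphism between finitely generated groups equipped with word metrics from finite generating sets is Lipschitz: each generator of the source maps to a word of some bounded length in the target. Applying this to $j$ gives a constant $\lambda' \geq 1$ with $|j(g)|_{A_\Gamma} \leq \lambda' |g|_G$, and to $i$ gives $\lambda'' \geq 1$ with $|i(w)|_G \leq \lambda'' |w|_{F_2}$. Chaining these with the lower bound from Theorem \ref{main} yields
$$\tfrac{1}{\lambda}|w|_{F_2} - \epsilon \;\leq\; |j(i(w))|_{A_\Gamma} \;\leq\; \lambda' |i(w)|_G \;\leq\; \lambda' \lambda'' |w|_{F_2},$$
so that $\tfrac{1}{\lambda \lambda'}|w|_{F_2} - \tfrac{\epsilon}{\lambda'} \leq |i(w)|_G \leq \lambda'' |w|_{F_2}$, which is exactly the statement that $i$ is a quasi-isometric embedding. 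This contradicts the hypothesis, finishing the argument.

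There is no real obstacle here; the corollary is essentially a packaging of Theorem \ref{main} together with the Lipschitz property of group homomorphisms. The only point worth flagging is the verification that $\psi(a)$ and $\psi(b)$ fail to commute, which is needed to trigger Theorem \ref{main} and which follows instantly from injectivity of $\psi$.
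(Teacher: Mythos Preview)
Your proof is correct and follows essentially the same approach as the paper: compose the inclusion $F_2 \hookrightarrow G$ with the putative embedding $G \hookrightarrow A_\Gamma$, invoke Theorem~\ref{main} on the composite, and use that homomorphisms between finitely generated groups are Lipschitz. The paper frames the contradiction via an infinite sequence $w_n$ with $|i(w_n)|$ sublinear in $|w_n|$, whereas you derive the quasi-isometry inequalities for $i$ directly; these are just contrapositive presentations of the same argument, and your explicit verification that $\psi(a)$ and $\psi(b)$ do not commute (needed to trigger Theorem~\ref{main}) is a point the paper leaves implicit.
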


\begin{proof}
Consider the inclusion $i: F_2 \to G$.  As noted at the beginning of the proof of Theorem \ref{main}, for all $w \in F_2$, we have an upper bound $|i(w)| \leq |w| \max \{|i(a)|,|i(b) | \}$.  Thus in order for $i$ to fail to be a quasi-isometric embedding, $F_2$ must contain an infinite sequence of elements $w_n$ such that $|i(w_n)|$ is sublinear with respect to $|w_n|$.  

Let $\{g_i\}$ be a generating set for $G$.  If there were an injective homomorphism $\alpha: G \to A_\Gamma$, then $\alpha \circ i$ would be an injection.  For all $w_n$ we would have $|\alpha(w_n))| \leq |i(w_n)| \max\{|\alpha(g_i)|\}$, which would still be sublinear with respect to $|w_n|$.  Thus the map $\alpha \circ i$ violates Theorem \ref{main}.
\end{proof}

Note however that, while $\pi_1(M)$ is not isomorphic to a subgroup of any RAAG, it is a hyperbolic free by cyclic HNN extension.  The theorem of Hsu and Wise \cite{HW10} (reproduced as Theorem \ref{hsuwise} here) applies.  Thus $\pi_1(M)$ has a finite index subgroup $H$ that is isomorphic to a subgroup of some RAAG.  It would be interesting to identify such an $H$.  Here is a first step.

\begin{prop}
If $H$ is a finite index normal subgroup of $\pi_1(M)$ with abelian quotient, then $H$ is not isomorphic to a subgroup of any RAAG.
\end{prop}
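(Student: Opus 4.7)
The plan is to apply Corollary \ref{noembed} to $H$. For that I need to exhibit a free two-generator subgroup of $H$ whose inclusion into $H$ fails to be a quasi-isometric embedding. The natural candidate is the original $F_2 = \langle a, b \rangle$, which the paper has already shown is not quasi-isometrically embedded in $\pi_1(M)$.

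First I would verify that $F_2 \subset H$. The relations $tat^{-1} = ab$ and $tbt^{-1} = bab$ imply $b = 1$ and $a = 1$ in the abelianization, so $\pi_1(M)^{\mathrm{ab}} = \mathbb{Z}$ generated by the image of $t$. Any surjection onto a finite abelian group therefore factors through $\mathbb{Z} \to \mathbb{Z}/n_0\mathbb{Z}$ for some positive integer $n_0$, and $H$ is the preimage of $0$, that is, the set of elements whose $t$-exponent is divisible by $n_0$. Since $a$ and $b$ have $t$-exponent $0$, the free subgroup $F_2 = \langle a,b\rangle$ is contained in $H$.

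Next I would show that the inclusion $F_2 \hookrightarrow H$ is not a quasi-isometric embedding. The key is that $H$ has finite index in $\pi_1(M)$, so by the \v{S}varc--Milnor Lemma the inclusion $H \hookrightarrow \pi_1(M)$ is a quasi-isometry, and in particular $|g|_H$ is comparable to $|g|_{\pi_1(M)}$ for $g \in H$. Consider the elements $g_n = t^n a t^{-n}$. Each $g_n$ has $t$-exponent $0$, hence lies in $F_2 \cap H$, and $|g_n|_{\pi_1(M)} \le 2n+1$, so $|g_n|_H = O(n)$. However the paper has already established that $|g_n|_{F_2} \ge 2^n$, because each of the $n$ cancellations of a $t^{\pm 1}$ at least doubles the number of $a$'s and $b$'s and no cancellation ever occurs. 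Thus the inclusion sends a sequence with exponentially growing $F_2$-length to a sequence with only linearly growing $H$-length, which rules out any quasi-isometric embedding.

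Finally, Corollary \ref{noembed} applied with $G = H$ and the rank-two free subgroup $F_2 = \langle a,b\rangle \subset H$ yields that $H$ is not isomorphic to a subgroup of any RAAG. There is no real obstacle: every ingredient is already available in the paper. The only slightly subtle point is the identification of the abelianization of $\pi_1(M)$, which is needed to guarantee that $F_2$ is contained in every finite-index normal subgroup with abelian quotient.
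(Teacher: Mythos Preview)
Your proof is correct and follows essentially the same route as the paper: show $F_2=\langle a,b\rangle\subset H$, then use the elements $t^{n}at^{-n}$ to witness that $F_2\hookrightarrow H$ is not a quasi-isometric embedding, and invoke Corollary~\ref{noembed}. The only cosmetic differences are that the paper deduces $a,b\in H$ from $[\pi_1(M),\pi_1(M)]\subset H$ rather than computing the abelianization, and bounds the $H$-length directly via $t^m\in H$ (writing $(t^m)^n a (t^m)^{-n}$) instead of appealing to the \v{S}varc--Milnor lemma.
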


\begin{proof}
Let $H$ be an index $m$ normal subgroup of $\pi_1(M)$.  If $\pi_1(M)/H$ is abelian, then the commutator subgroup, $[\pi_1(M), \pi_1(M)]$, is a subgroup of $H$.  So $a^{-1}tat^{-1} = b \in H$.  Also, $tbt^{-1}b^{-1}=ba \in H$, so $a\in H$.  Therefore, $F_2 \subset H$.  Finally, $t^m \in H$.  Thus we have a sequence of $w_n\in F_2$ such that $i(w_n)=t^{mn}at^{-mn}$ and $|i(w_n)| \leq 2 \log_2 |w_n| +1$.  Thus $H$ is not isomorphic to a subgroup of any RAAG by Corollary \ref{noembed}.
\end{proof}

Hsu and Wise's construction in \cite{HW10} gives a cube complex $C$ with an action by $\pi_1(M)$.  The proposition above shows that the action of $H$ is not special.  Equivalently, no abelian cover of $C/\pi_i(M)$ is special.

\begin{ques}
What is the smallest index subgroup of $\pi_1(M)$ that embeds in a RAAG?   Does this number change for different hyperbolic groups of the form $F_2 \rtimes \mathbb{Z}$?
\end{ques}

\begin{ques}
Can we find groups acting on CAT(0) cube complexes such that no subgroup of a given index acts has a special action?  Can we find these in the class of $3$-manifold groups?
\end{ques}

\subsection{A badly embedded free subgroup}

Baudisch's three-generator subgroup shows that the most naive generalization of our main theorem fails to hold when we increase the number of generators of $F$.  However, even assuming that $F_n$ injects into $A_\Gamma$ is insufficient to guarantee a quasi-isometric embedding.  

As noted before, if $M$ is the figure-8 knot complement, then $\pi_1(M)$ does not embed in any RAAG, but some finite index subgroup $H < \pi_1(M)$ does.  The group $H \cap F_2$ is a finite index subgroup of $F_2$ and thus finitely generated \cite{Sch27}, \cite[3.9]{LS77}.  Call it $F_n$.  The inclusions of $H$ into $\pi_1(M)$ and $F_n$ into $F_2$ are quasi-isometries (this is a standard exercise).  

\begin{align*}
\xymatrix@R=.5pc{ 
F_n \ar[dd]_{i_F} \ar[r]^i & H \ar[dd]^{i_{\pi_1(M)}} \ar[r]^\alpha_{\textrm{q.i.}}&  A_\Gamma\\
\\
F_2 \ar[r]_i & \pi_1(M) 
}
\end{align*}

It is straightforward to show that when a quasi-isometric embedding is composed with a map that is not a quasi-isometric embedding, the result is not a quasi-isometric embedding.  We conclude that $\alpha \circ i$ is an injective homomorphism from free group to a RAAG that is not a quasi-isometric embedding.

\begin{ques}
What is the minimum rank $n$ such that some RAAG has a free subgroup of rank $n$ whose inclusion is not a quasi-isometric embedding?
\end{ques}

\bibliographystyle{alpha}
\bibliography{CAT0}
\end{document}